\newcommand{\lang}{\left \langle}
\newcommand{\rang}{\right \rangle}
\newcommand{\cQ}{\mathcal{Q}}
\renewcommand{\div}{\operatorname{div}}
\newtheorem{theorem}{Theorem}
\newtheorem{lemma}{Lemma}
\def\bu {\bm{u}}
\def\bv {\bm{v}}
\def\bV {\bm{V}}
\def\bZ {\bm{Z}}
\def\bn {\bm{n}}
\def\d {\partial}
\def\eps {\varepsilon}
\def\cO {\mathcal O}
\def\cC {\mathcal C}
\def\cV {\mathcal V}
\def\R{\mathbb{R}}
\def\<{\lang}
\def\>{\rang}
\numberwithin{equation}{section}
\newcommand{\rev}[1]{#1}
\begin{document}

\title{A mortar method for the coupled Stokes-Darcy problem using the MAC scheme for Stokes and mixed finite elements for Darcy}

\author{Wietse M. Boon\footnotemark[1]
  \and
  Dennis Gl\"aser\footnotemark[2]
  \and
  Rainer Helmig\footnotemark[2]
  \and
  Kilian Weishaupt\footnotemark[2]
  \and
  Ivan Yotov\footnotemark[3]}
\renewcommand{\thefootnote}{\fnsymbol{footnote}}

\footnotetext[1]{Politecnico di Milano, Italy; \texttt{wietsemarijn.boon@polimi.it}}
\footnotetext[2]{University of Stuttgart, Germany; \texttt{dennis.glaeser@iws.uni-stuttgart.de, rainer.helmig@iws.uni-stuttgart.de, kilian.weishaupt@iws.uni-stuttgart.de}}
\footnotetext[3]{University of Pittsburgh, USA; \texttt{yotov@math.pitt.edu}}

\renewcommand{\thefootnote}{\arabic{footnote}}

\date{}

\maketitle

\begin{abstract}
A discretization method with non-matching grids is proposed for the coupled Stokes-Darcy problem that uses a mortar variable at the interface to couple the marker and cell (MAC) method in the Stokes domain with the Raviart-Thomas mixed finite element pair in the Darcy domain. Due to this choice, the method conserves linear momentum and mass locally in the Stokes domain and exhibits local mass conservation in the Darcy domain. The MAC scheme is reformulated as a mixed finite element method on a staggered grid, which allows for the proposed scheme to be analyzed as a mortar mixed finite element method. We show that the discrete system is well-posed and derive a priori error estimates that indicate first order convergence in all variables. The system can be reduced to an interface problem concerning only the mortar variables, leading to a non-overlapping domain decomposition method. Numerical examples are presented to illustrate the theoretical results and the applicability of the method.
\end{abstract}

\section{Introduction} 
\label{sec:intro}
The coupled Stokes-Darcy flow problem, which models coupled free fluid and porous media flows, has been extensively studied in recent years due to its numerous applications, including coupled surface and subsurface flows, flows through fractured or vuggy porous media, flows through industrial filters, and flows through biological tissues. The most commonly used formulation couples the two regions through continuity of normal velocity, balance of force, and the Beavers-Joseph-Saffman slip with friction interface conditions. Some of the early works on the mathematical and numerical analysis of Stokes-Darcy flows are \cite{DMQ,Disc-Quart-2004} using a pressure Darcy formulation and \cite{LSY,RivYot} using a mixed Darcy formulation. Since then, various numerical methods have been developed, see e.g., \cite{Galvis-Sarkis,Gatica-09,Gatica-11,Kar-Mar-Win,KanRiv,ArbBrun,Bernardi-etal-mortar-SD,boon2020parameter,flux-mortar}. The focus of this paper is the development and analysis of a numerical scheme that exhibits local momentum and mass conservation in the Stokes region and local mass conservation in the Darcy region, and allows for non-matching grids along the Stokes-Darcy interface. To the best of our knowledge, such method has not been previously developed in the literature. 

Our method couples the marker and cell (MAC) scheme \cite{Harlow-Welch} for Stokes with a mixed finite element (MFE) method for Darcy. The MAC scheme is a popular method in computational fluid dynamics, due to its local momentum and mass conservation properties. We restrict our attention to rectangular elements and refer to  \cite{Han-Wu,Eymard-MAC-rectangles,Kanschat,Nic,Nic-Wu,Gir-Lop,Li-Sun} for previous works on its analysis on such grids. On the other hand, the MFE method is widely used for Darcy flow, due to its local mass conservation and direct approximation of the Darcy velocity. In this paper we consider affine elements in the Darcy region, such as simplices and parallelograms. While the analysis can be carried out for any stable pair of MFE spaces of arbitrary degree, since the MAC scheme is of first order, we focus on the lowest order Raviart-Thomas spaces RT$_0$  \cite{RT}. A key feature of our method is that it allows for non-matching grids along the Stokes-Darcy interface. Such generality is important in practical applications where different spatial resolution may be needed in the two regions. We handle the non-matching grids through the use of mortar finite elements \cite{ACWY,APWY,GVY}. In particular, we introduce a mortar interface variable with the physical meaning of Darcy pressure and Stokes normal stress, which is used to impose weakly the continuity of normal velocity on the interface. The mortar variable is defined on a separate interface grid, which may differ from the traces of the subdomain grids. This further allows for the flexibility to choose the mortar finite element grid on a coarse scale, resulting in a multiscale discretization \cite{APWY,GVY}. The mortar method is suitable for the use of non-overlapping domain decomposition methods for the solution of the resulting coupled algebraic system \cite{Galvis-Sarkis-DD,Chen-Gunz-Robin,Disc-Quart-Valli-2007,Disc-Quart-2003,VasWangYot}. In particular, we present an algorithm that reduces the coupled problem to an interface problem for the mortar variable. We show that the interface problem is symmetric and positive definite and employ the conjugate gradient (CG) method for its solution. Each CG iteration requires the solution of subdomain Stokes and Darcy problems with specified normal stress for Stokes and pressure for Darcy on the interface. Therefore the solution algorithm involves only single-physics problems. This has an advantage compared to a monolithic solver for the fully coupled system, which has both larger dimension and larger condition number. 

There are several previous works that are relevant to our method. The MAC scheme for the coupled Stokes-Darcy problem has been studied in \cite{MAC-SD,Li-Rui,Rui-Sun}. The analysis in these papers is based on finite difference arguments and is restricted to matching grids on the interface. In \cite{MAC-MPFA}, a numerical method for the coupled Navier-Stokes - Darcy problem is developed, which is based on the MAC scheme in the fluid region and multipoint flux approximation (MPFA) \cite{MPFA,Edwards-Rogers} in the porous media region. The method is restricted to matching grids and numerical analysis is not presented. The method presented here can be considered as extension of the method from \cite{MAC-MPFA} to non-matching grids through the use of mortar finite elements. We further note that, while we focus on the RT$_0$ MFE method, our method and its analysis can be extended to the multipoint flux mixed finite element (MFMFE) discretization for Darcy flow \cite{WY-MPFA,AEKWY,Ing-Whe-Yot},
which is closely related to the MPFA method, using techniques developed in \cite{WheXueYot-msmortar,Song-Yotov-vegas,Song-Wang-Yotov}.

Or analysis is based on the reformulation of the MAC scheme for Stokes as a conforming MFE method \cite{Han-Wu}. In particular, a staggered grid for each component of the velocity can be formed with vertices corresponding to the degrees of freedom for this component, i.e., the midpoints of the associated edges (faces). Then a continuous bilinear (trilinear) field can be constructed for each velocity component on its staggered grid and the MAC scheme can be formulated as a conforming MFE method. This reformulation allows us to cast the MAC-MFE method as a MFE-MFE method and utilize tools from mortar MFE methods \cite{ACWY,APWY,GVY} in the analysis. 

The reminder of the paper is organized as follows. Some notation is introduced at the end of this section. The Stokes-Darcy model and its variational formulation are presented in Section~\ref{sec:the_model}. The numerical method is developed in Section~\ref{sec:pressure_mortar_method}. Its well-posedness analysis is carried out in Section~\ref{sec:well-posed}, followed by error analysis in Section~\ref{sec:error}. The non-overlapping domain decomposition algorithm is developed in Section~\ref{sec:DD}. Section~\ref{sec:numerical} is devoted to numerical experiments that illustrate the theoretical convergence results, as well as the performance and flexibility of the method applied to two challenging practical problems. Conclusions are presented in Section~\ref{sec:conclusions}.

We utilize the following notation in the paper. For a domain $\cO \subset \R^n$, $n \in \{2, 3\}$, $H^k(\cO)$, $k \ge 0$, is the standard notation for a Hilbert space equipped with a norm $\|\cdot\|_{k,\cO}$ \rev{and a seminorm $|\cdot|_{k,\cO}$}. The $L^2(\cO)$-inner product is denoted by $(\cdot,\cdot)_{\cO}$. We omit the subscript if $\cO = \Omega$. For a section of a domain boundary $G \subset \R^{n-1}$, $\<\cdot,\cdot\>_{G}$ denotes the $L^2(G)$-inner product or duality pairing.
The expression $a \lesssim b$ denotes that there exists a constant $C > 0$, independent of $a$, $b$, and the discretization parameter $h$, such that $C a \le b$. The definition of $a \gtrsim b$ is similar.

\section{The model problem and its variational formulation} 
\label{sec:the_model}

Consider an open, bounded domain $\Omega \subset \mathbb{R}^n$, $n \in \{2, 3\}$, partitioned into two disjoint subdomains $\Omega_S$ and $\Omega_D$ with interface $\Gamma = \partial{\Omega}_S \cap \partial{\Omega}_D$. Subscripts $S$ and $D$ are used, throughout this work, to denote entities related to Stokes and Darcy flow, respectively. Let $\bn_i$ denote the outward unit vector normal to
$\partial\Omega_i$, $i=S,D$. Let the symmetric gradient and the stress
be given by
\begin{align}
	\bm{\varepsilon}(\bu) &= \frac{1}{2}\left(\nabla \bu + (\nabla \bu)^T \right), &
	\bm{\sigma}_S &= 2 \mu \bm{\varepsilon}(\bu_S) - p_SI,
\end{align}
with $\mu > 0$ the viscosity. We consider the steady state Stokes-Darcy problem:
\begin{subequations}
	\begin{align}
		-\nabla \cdot \bm{\sigma}_S &= \bm{f}_S, 
		& \text{in }&\Omega_S, \label{stokes-1}\\ 
		\nabla \cdot \bu_S &= 0, 
		& \text{in }&\Omega_S, \label{stokes-2} \\
		\bu_D + \mu^{-1}K \nabla p_D &= 0, 
		& \text{in }&\Omega_D,\\ 
		\nabla \cdot \bu_D &= f_D, 
		& \text{in }&\Omega_D.
	\end{align}
The permeability $K$ is a positive-definite tensor whereas $\bm{f}_S$ and $f_D$ are given source terms. The coupling conditions on $\Gamma$ are given by mass conservation, momentum conservation, and the Beavers-Joseph-Saffman (BJS) condition, respectively:
\begin{align}
\bu_S \cdot \bn_S + \bu_D \cdot \bn_D  &= 0
& \text{on } &\Gamma,\label{eq: coupling_mass}\\
(\bm{\sigma}_S \, \bn_S)\cdot \bn_S &= -p_D
& \text{on } &\Gamma, \label{eq: coupling_mom}\\
(\bm{\sigma}_S \, \bn_S)\cdot \bm{\tau}_S
&= - \frac{\mu \alpha}{\sqrt{K_{\tau}}} \bu_S \cdot \bm{\tau}_S 
=: - \alpha_{BJS} \, \bu_S \cdot \bm{\tau}_S, 
& \text{on } &\Gamma, \label{eq:BJS}
\end{align}
where $K_\tau = (K \bm{\tau}_S) \cdot \bm{\tau}_S$ and $\alpha > 0$ is an experimentally determined coefficient. In \eqref{eq:BJS}, to simplify the notation, we have adopted notation for a one-dimensional interface $\Gamma$, with $\bm{\tau}_S$ being the unit tangential vector on $\Gamma$. In the case of a two-dimensional interface $\Gamma$, \eqref{eq:BJS} involves a sum over the the two unit tangential vectors on $\Gamma$. Finally, the following boundary conditions close the system:
\begin{align}
		\bu_S &= 0, & \text{on } & \partial \Omega_S \setminus \Gamma, \label{eq: BC essential1} \\
		\bu_D \cdot \bn_D &= 0, & \text{on } & \partial \Omega_D \setminus \Gamma.
                \label{eq: BC essential}
	\end{align}
\end{subequations}
Due the choice of boundary conditions, the source term $f_D$ must satisfy the
compatibility condition $\int_{\Omega_D} f_D = 0$. 

We proceed with the variational formulation of the Stokes-Darcy model problem. The function spaces for the velocity incorporate the essential boundary conditions \eqref{eq: BC essential1}--\eqref{eq: BC essential} and are defined
as follows:
\begin{subequations}
\begin{align}
		\bV_S &:= \left\{ \bv \in (H^1(\Omega_S))^n :\ 
		 \bv|_{\partial \Omega_S \setminus \Gamma} = 0 \right\}, \label{defn-VS}\\
		\bV_D &:= \left\{ \bv \in H(\div;\Omega_D) :\ 
		\bv\cdot\bn_D|_{\partial \Omega_D \setminus \Gamma} = 0 \right\}, \\
		\bV &:= \bV_S \times \bV_D,
\end{align}
where
\begin{align}
	H(\div;\Omega_D) &:= \{\bv \in (L^2(\Omega_D))^n: \ \nabla \cdot \bv \in L^2(\Omega_D)\}
\end{align}
equipped with the norm $\|\bv\|_{\div;\Omega_D}^2 := \|\bv\|_{\Omega_D}^2 + \|\nabla\cdot \bv\|_{\Omega_D}^2$. Second, the pressure space is naturally given by:
\begin{align}\label{W-defn}
  W &:= (W_S \times W_D)\cap L_0^2(\Omega) =
  (L^2(\Omega_S) \times L^2(\Omega_D))\cap L_0^2(\Omega) = L_0^2(\Omega),
\end{align}
where $L_0^2(\Omega)$ is the space of $L^2(\Omega)$ functions with
mean value zero. The norms in $\bV$ and $W$ are defined as
\begin{align}
\|\bv\|_V^2 &:= \|\bv_S\|_{1,\Omega_S}^2 + \|\bv_D\|_{\div;\Omega_D}^2, &
\|w\|_W &:= \|w\|.
\end{align}
Third, we introduce the Lagrange multiplier $\lambda$ to enforce \eqref{eq: coupling_mass} and \eqref{eq: coupling_mom}:
\begin{align}
	\lambda &\in \Lambda := H^{1/2}(\Gamma), &
	\lambda &= p_D = - (\bm{\sigma}_S \, \bn_S)\cdot \bn_S.
\end{align}
\end{subequations}
\rev{The space $\Lambda$ is chosen as the dual of the space $\{\bv_D \cdot \bn_D|_\Gamma : \bv_D \in \bV_D\}$. In particular, since $\bv_D \in H(\div;\Omega_D)$ and $\bv_D \cdot \bn_D = 0$ on $\partial \Omega_D \setminus \Gamma$, it holds that $\bv_D \cdot \bn_D|_\Gamma \in H^{-1/2}(\Gamma)$.}

With the function spaces defined, we continue with the variational formulation. We test the equations defined in the
free flow domain with $\bv_S \in \bV_S$ to obtain:
\begin{subequations} \label{eq: var_forms}
\begin{align}
	 -(\nabla \cdot \bm{\sigma}_S, \bv_S)_{\Omega_S} &= 
	(\bm{\sigma}_S, \nabla \bv_S)_{\Omega_S} 
	- \lang \bm{\sigma}_S \, \bn_S, \bv_S \rang_{\Gamma} 
	\nonumber\\
	&= (2 \mu \bm{\varepsilon}(\bu_S), \rev{\nabla}\bv_S)_{\Omega_S} 
	- (p_S, \nabla \cdot \bv_S)_{\Omega_S} 
	+ \lang \alpha_{BJS} \, \bu_S \cdot \bm{\tau}_S, \bv_S \cdot \bm{\tau}_S \rang_{\Gamma}
	+ \lang \lambda, \bv_S\cdot \bn_S \rang_{\Gamma} \nonumber \\
	&= (\bm{f}_S, \bv_S)_{\Omega_S}. 
\end{align}

On the other hand, in the porous medium, we test Darcy's law with $\bv_D \in \bV_D$ to arrive at
\begin{align} 
	(\mu K^{-1} \bu_D, \bv_D)_{\Omega_D} 
	- (p_D, \nabla \cdot \bv_D)_{\Omega_D} 
	+ \lang \lambda, \bv_D \cdot \bn_D \rang_\Gamma 
	&= 0.
\end{align}

The Lagrange multiplier space $\Lambda$ is then used to impose flux continuity. In particular, using a test function $\xi \in \Lambda$, we impose
\begin{align}
  \lang \bu_S \cdot \bn_S + \bu_D \cdot \bn_D, \xi \rang_{\Gamma} = 0.
\end{align}
\end{subequations}
Combining equations \eqref{eq: var_forms} with the mass conservation equations, we arrive at the
variational problem: find the triplet $(\bu, p, \lambda) \in \bV \times W \times \Lambda$ such that
for all $(\bv, w, \xi) \in \bV \times W \times \Lambda$,
\begin{subequations} \label{eq: variational formulation}
\begin{align}
  (2 \mu \bm{\varepsilon}(\bu_S), \rev{\nabla}\bv_S)_{\Omega_S}
  + \lang \alpha_{BJS} \, \bu_S \cdot \bm{\tau}_S, \bv_S \cdot \bm{\tau}_S \rang_{\Gamma}
	& \nonumber\\
  - (p_S, \nabla \cdot \bv_S)_{\Omega_S}
  + \lang \lambda, \bv_S\cdot \bn_S \rang_{\Gamma} 
	&= (\bm{f}_S, \bv_S)_{\Omega_S} \\
	(\nabla \cdot \bu_S, w_S)_{\Omega_S} 
	&= 0 \\
	(\mu K^{-1} \bu_D, \bv_D)_{\Omega_D} 
  - (p_D, \nabla \cdot \bv_D)_{\Omega_D}
  + \lang \lambda, \bv_D \cdot \bn_D \rang_\Gamma 
	&= 0 \\
	(\nabla \cdot \bu_D, w_D)_{\Omega_D} 
  &= (f_D, w_D)_{\Omega_D} \\
  \lang \bu_S \cdot \bn_S + \bu_D \cdot \bn_D, \xi \rang_{\Gamma}
	&= 0.
\end{align}
\end{subequations}
Introducing the bilinear forms
%
  \begin{align*}
    a_S(\bu_S, \bv_S) &:= (2 \mu \bm{\varepsilon}(\bu_S), \rev{\nabla}\bv_S)_{\Omega_S}
    + \lang \alpha_{BJS} \, \bu_S \cdot \bm{\tau}_S, \bv_S \cdot \bm{\tau}_S \rang_{\Gamma}, &
    a_D(\bu_D, \bv_D) &:= (\mu K^{-1} \bu_D, \bv_D)_{\Omega_D}, \\
    a(\bu, \bv) &:= a_S(\bu_S, \bv_S) + a_D(\bu_D, \bv_D), \\
    b_i(\bv_i, w_i) &:= -(\nabla \cdot \bv_i, w_i)_{\Omega_i}, & 
    i &= S,D,\\
    b(\bv, w) &:= b_S(\bv_S, w_S) + b_D(\bv_D, w_D), \\
    b_{\Gamma}(\bv,\xi) &:= \lang \bv_S \cdot \bn_S + \bv_D \cdot \bn_D, \xi \rang_{\Gamma},
  \end{align*}
%
this system has the following structure:
\begin{subequations}\label{weak}
  \begin{align}
    a(\bu, \bv) + b(\bv, p) + b_{\Gamma}(\bv,\lambda) &=
    (\bm{f}_S, \bv_S)_{\Omega_S}, & \forall \bv &\in \bV, \label{weak-1} \\
    b(\bu,w) &= -(f_D, w_D)_{\Omega_D}, & \forall w &\in W, \label{weak-2} \\
    b_{\Gamma}(\bu,\xi) &= 0, & \forall \xi &\in \Lambda. \label{weak-3}
  \end{align}
\end{subequations}
\rev{The system \eqref{weak} is a symmetric two-fold saddle point problem.
Existence and uniqueness of a solution has been shown in \cite{LSY}. The solution satisfies
  $$
  	\|\bu_{S}\|_{1,\Omega_S} 
  	+ \|\bu_{D}\|_{\div;\Omega_D}
  	+ \|p\|_{\Omega} + \|\lambda\|_{H^{1/2}(\Gamma)}
  	\lesssim
  	\| \bm{f}_S \|_{-1, \Omega_S}
  	+ \| f_D \|_{\Omega_D}.
  $$
}

\section{\rev{Mortar MAC--MFE method}} 
\label{sec:pressure_mortar_method}

\begin{figure}
    \centering
    \includegraphics[width=0.35\textwidth]{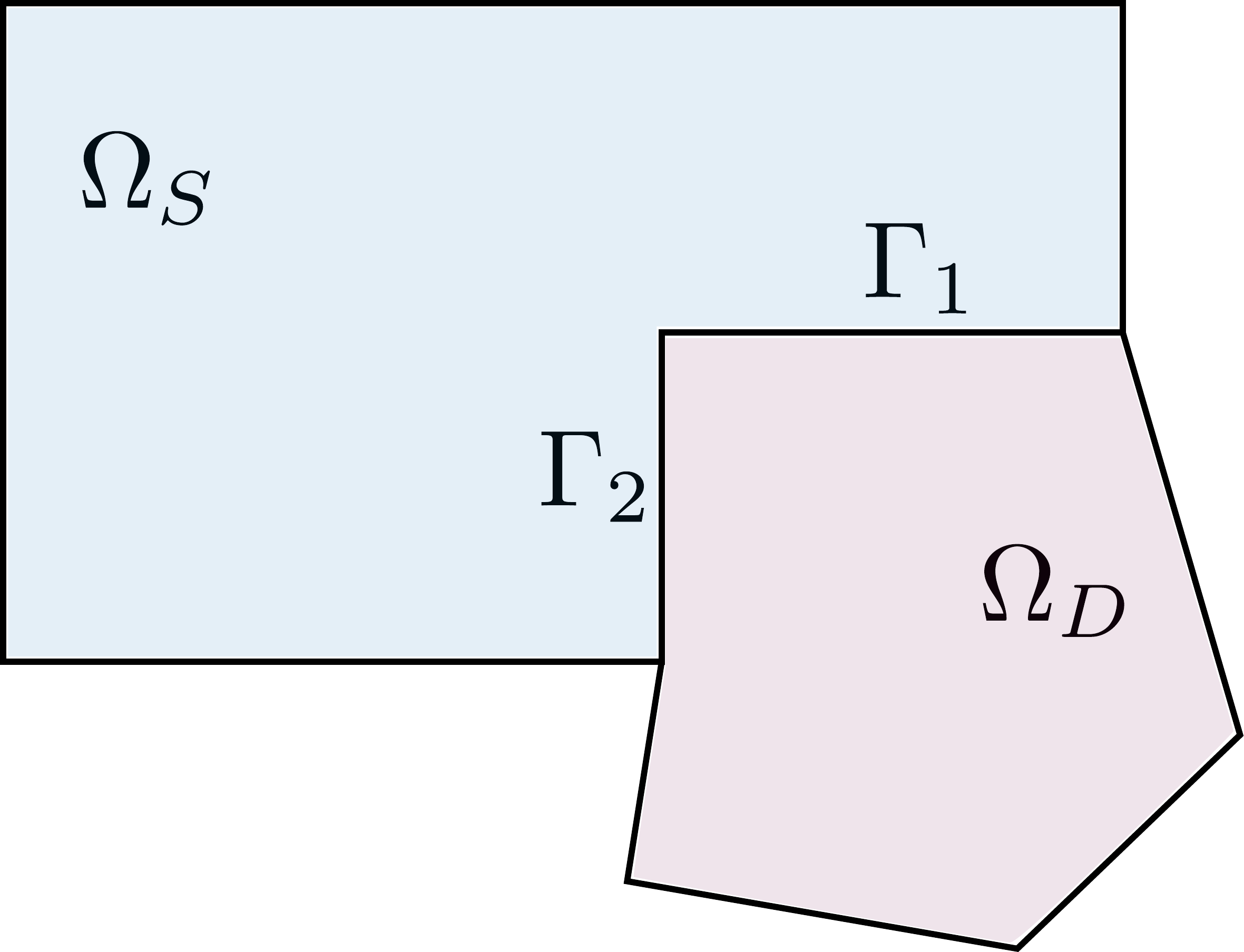}
    \caption{The Stokes-Darcy domain.}
    \label{fig:domain}
\end{figure}

Assume that the subdomains $\Omega_S$ and $\Omega_D$ are polytopal and 
consider shape-regular meshes on $\Omega_S$ and $\Omega_D$ denoted by
$\Omega_{S,h}$ and $\Omega_{D, h}$, respectively. The two meshes may
be non-matching on the interface $\Gamma$. In the Stokes region we
consider the MAC scheme \cite{Harlow-Welch}, described below, and restrict
$\Omega_{S,h}$ to rectangular-type elements. \rev{Due to this restriction, $\Gamma$ is a piecewise linear curve consisting of horizontal and vertical segments, denoted by $\Gamma^1$ and $\Gamma^2$, respectively, see Figure~\ref{fig:domain}.} The Darcy mesh $\Omega_{D, h}$ may consist of affine elements. Let $\bV_{D,h}\times W_{D,h} \subset \bV_D\times W_D$ be mixed finite element
spaces that form a stable pair for the Darcy sub-problem. Even though
theoretically arbitrary order spaces may be used, since the MAC scheme
is of first order, we focus on the lowest order Raviart-Thomas
spaces RT$_0$ \cite{RT} for $\bV_{D, h}$ and the piecewise constants for $W_{D, h}$.
We emphasize that this pair of spaces has the property:
\begin{align} \label{eq: div property}
	\nabla \cdot \bV_{D, h} = W_{D, h}.
\end{align}

\rev{The Lagrange multiplier space $\Lambda$ in \eqref{weak} is discretized as follows.}
We consider a tessellation of $\Gamma$ denoted by $\Gamma_h$, which can be constructed independently of the previously introduced meshes. Let
$\Lambda_h$ be the discretization of $\Lambda$ consisting of
(dis)continuous, piecewise polynomials. For simplicity of the presentation we consider the mortar grid on the same scale $h$ as the traces of the subdomain grids. The analysis can be extended to a multiscale setting with the mortar grid defined on a coarse scale $H$, utilizing multiscale mortar finite element techniques developed in \cite{APWY,GVY}. 

We next describe the MAC scheme used in the Stokes region. The pressure $p_S$ is computed at the centers of the elements of $\Omega_{S,h}$. The normal velocities $\bu_S\cdot\bn$ are computed at the centers of the edges (faces) of the elements. For example, in two dimensions these are the horizontal velocities $u_{S,1}$ at the midpoints of the vertical edges, and the vertical velocities $u_{S,2}$ at the midpoints of the horizontal edges, see Figure~\ref{fig-MAC} (left). We note that these are the same as the degrees of freedom of the RT$_0$ spaces. \rev{We denote the discrete MAC velocity and pressure spaces as $\bV_S^{\rm MAC}$ and $W_S^{\rm MAC}$, respectively.} For each edge we consider an associated control volume obtained by drawing lines parallel to the edge through the centers of the two neighboring elements. If an edge is on the boundary, it is associated with a half-volume. We denote a generic control volume by $G_i$, with $i=1$ for vertical edges and $i=2$ for horizontal edges, see Figure~\ref{fig-MAC} (left). The momentum balance \eqref{stokes-1} is imposed component-wise: $-\nabla\cdot \bm{\sigma}_{S,i} = \bm{f}_{S,i}$, $i=1,2$, where $\bm{\sigma}_{S,i}$ is the $i$-th row of $\bm{\sigma}_{S}$. The divergence theorem gives
\begin{align*}
-\int_{\partial G_i} \bm{\sigma}_{S,i} \cdot\bn = \int_{G_i} \bm{f}_{S,i},
\end{align*}
where $\bn$ is the unit outward normal vector to $G_i$.
Taking $i=1$, and using the notation from Figure~\ref{fig-MAC}, we obtain
\begin{align}\label{MAC-momentum-1}
-\int_{\partial G_1} \bm{\sigma}_{S,1} \cdot\bn
& = - \int_{\partial G_1} \left(2\mu\begin{pmatrix} \varepsilon_{11}\\\varepsilon_{12}
\end{pmatrix} - \begin{pmatrix} p_S \\ 0 \end{pmatrix} \right)\cdot \bn \nonumber \\
& =
- \int_{l_1}(-2\mu \varepsilon_{11} + p_S)
- \int_{r_1}(2\mu \varepsilon_{11} - p_S)
-\int_{b_1} -2\mu \varepsilon_{12}
-\int_{t_1} 2\mu \varepsilon_{12}
.
\end{align}
Similarly,
\begin{align}\label{MAC-momentum-2}
  -\int_{\partial G_2} \bm{\sigma}_{S,2} \cdot\bn
& = - \int_{\partial G_1} \left(2\mu\begin{pmatrix} \varepsilon_{21}\\\varepsilon_{22}
\end{pmatrix} - \begin{pmatrix} 0 \\ p_S \end{pmatrix} \right)\cdot \bn \nonumber \\  
  & =
- \int_{l_2} -2\mu \varepsilon_{21} 
- \int_{r_2} 2\mu \varepsilon_{21}
  -\int_{b_2} (-2\mu \varepsilon_{22} + p_S)
  -\int_{t_2} (2\mu \varepsilon_{22} - p_S)
.
\end{align}

\begin{figure}
    \centering
    \includegraphics[width=0.99\textwidth]{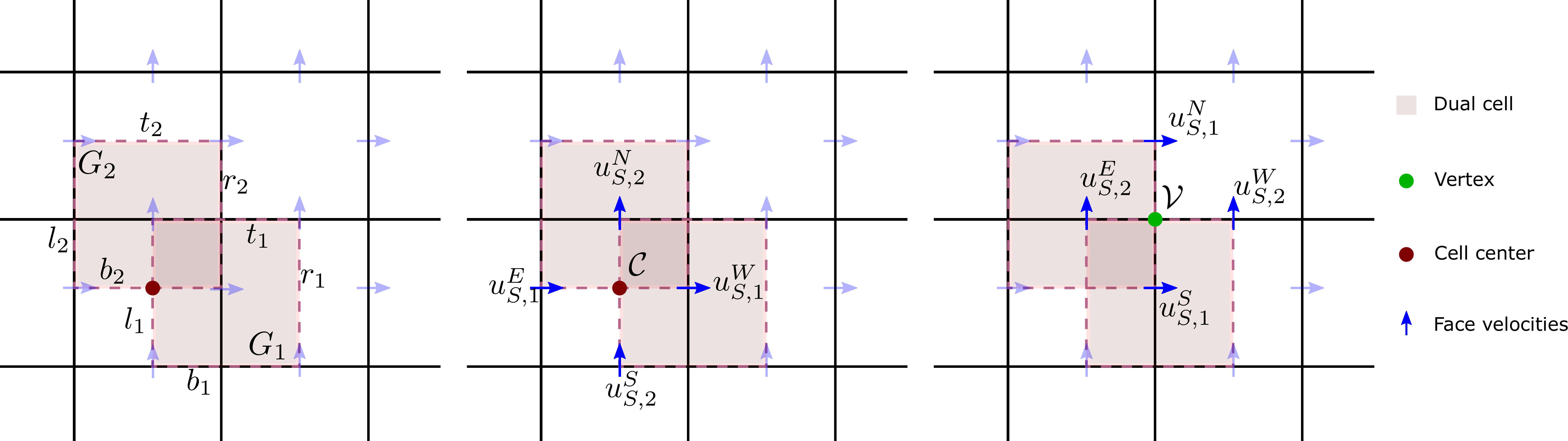}
    \caption{Grids and velocity degrees of freedom for the MAC scheme. }
    \label{fig-MAC}
\end{figure}

For full volumes $G_i$, the edge integrals in \eqref{MAC-momentum-1} and \eqref{MAC-momentum-2} are approximated by the midpoint rule, therefore $\varepsilon_{11}$, $\varepsilon_{22}$, and $p_S$ are evaluated at the centers of the primal cells, while $\varepsilon_{12}$ and $\varepsilon_{21}$ are evaluated at the vertices of the primal cells. Since
$\eps_{11} = \frac{\d u_{S,1}}{\d x}$, $\eps_{22} = \frac{\d u_{S,2}}{\d y}$,
and $\eps_{12} = \eps_{21} = \frac12\left(\frac{\d u_{S,1}}{\d y} + \frac{\d u_{S,2}}{\d x} \right)$, this implies that $\frac{\d u_{S,1}}{\d x}$,
$\frac{\d u_{S,2}}{\d y}$, and $p_S$ are evaluated at the cell centers, while
$\frac{\d u_{S,1}}{\d y}$ and $\frac{\d u_{S,2}}{\d x}$ are evaluated at the vertices. The cell-centered values are degrees of freedom for the pressure. For the velocity derivatives, using the notation from Figure~\ref{fig-MAC} (center, right), the quantities are approximated as
\begin{align}
\frac{\d u_{S,1}}{\d x}(\mathcal C) &= \frac1h(u_{S,1}^E - u_{S,1}^W), &
\frac{\d u_{S,2}}{\d y}(\mathcal C)  = \frac1h(u_{S,2}^N - u_{S,2}^S), \label{MAC-vel-1}\\
\frac{\d u_{S,1}}{\d y}(\mathcal V) &= \frac1h(u_{S,1}^N - u_{S,1}^S), &
\frac{\d u_{S,2}}{\d x}(\mathcal V)  = \frac1h(u_{S,2}^E - u_{S,2}^W),
\label{MAC-vel-2}
\end{align}
where for simplicity we have assumed that the mesh is uniform.

\begin{figure}
  \centering
  \includegraphics[width=0.7\textwidth]{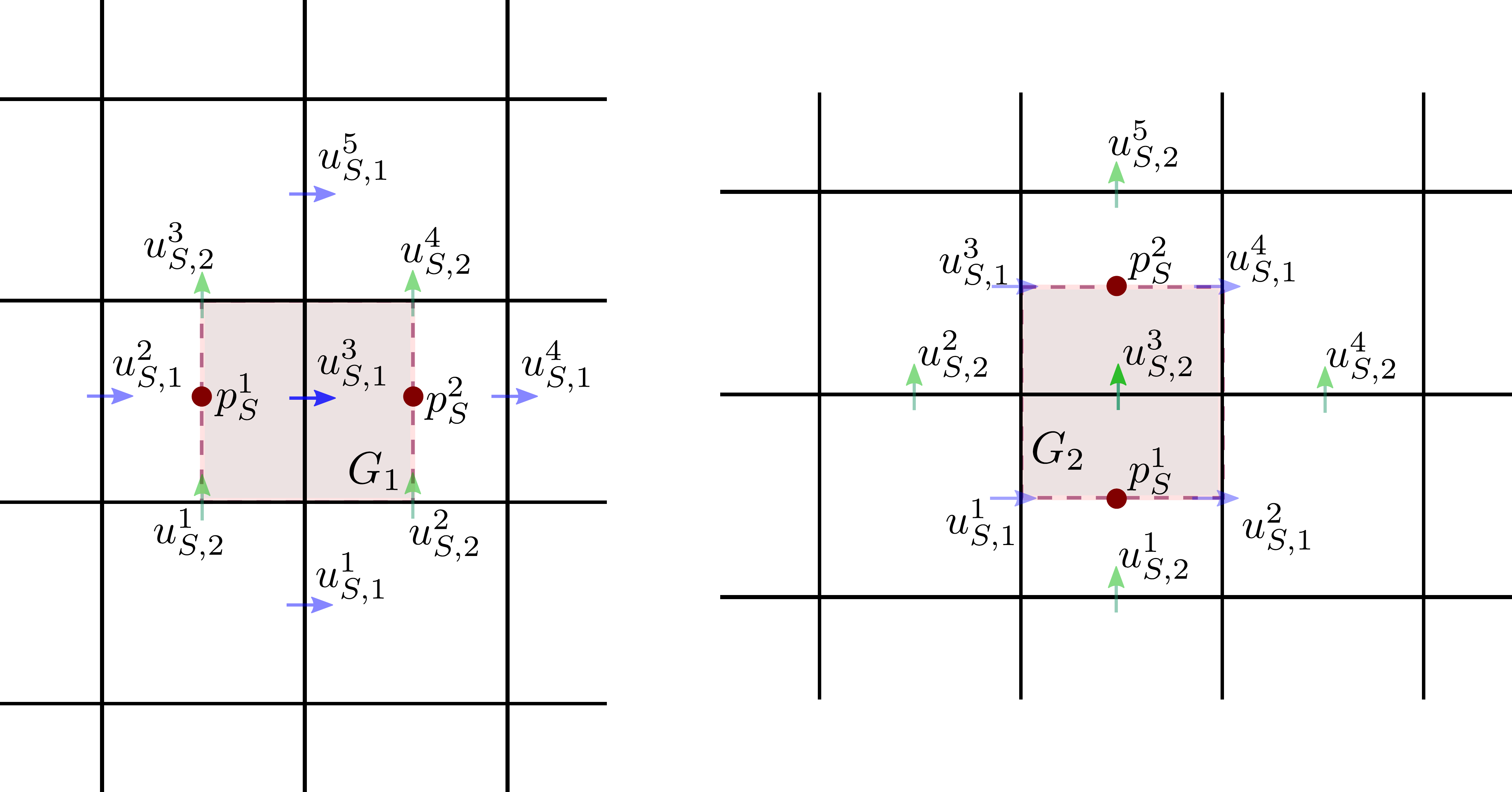}
\caption{Finite difference stencils for the MAC momentum balance equations on volumes $G_1$ (left) and $G_2$ (right).}
\label{fig-MAC-momentum}
\end{figure}  

\rev{
Referring to the notation from Figure~\ref{fig-MAC-momentum} and using \eqref{MAC-vel-1}--\eqref{MAC-vel-2}, the momentum balance equations \eqref{MAC-momentum-1}--\eqref{MAC-momentum-2} on interior volumes $G_1$ and $G_2$ give, respectively,
  \begin{align}
    & 2\mu\left(-u_{S,1}^2 + 2u_{S,1}^3 - u_{S,1}^4 + \frac12(-u_{S,1}^1 + 2u_{S,1}^3 - u_{S,1}^5)
    + \frac12(u_{S,2}^2 - u_{S,2}^1) - \frac12(u_{S,2}^4 - u_{S,2}^3) \right) \nonumber \\
    & \qquad + h\left(p_S^2 - p_S^1\right) = \int_{G_1} \bm{f}_{S,1}, \label{G1} \\
    & 2\mu\left(-u_{S,2}^1 + 2u_{S,2}^3 - u_{S,2}^5 + \frac12(-u_{S,2}^2 + 2u_{S,2}^3 - u_{S,2}^4)
    + \frac12(u_{S,1}^3 - u_{S,1}^1) - \frac12(u_{S,1}^4 - u_{S,1}^2) \right) \nonumber \\
    & \qquad + h\left(p_S^2 - p_S^1\right) = \int_{G_2} \bm{f}_{S,2}. \label{G2} 
  \end{align}
}

The mass balance \eqref{stokes-1} is imposed on the primal cells $E$:
\begin{equation}\label{MAC-mass}
\int_{\d E} \bu_S\cdot\bn = h(u_{S,1}^E - u_{S,1}^W + u_{S,2}^N - u_{S,2}^S) = 0.
\end{equation}

\rev{We next discuss briefly the MAC discretization of the boundary conditions. The condition $\bu_S\cdot\bn_S$ is essential, since the MAC degrees of freedom include the normal velocities on the boundary. In this case, the momentum balance equation \eqref{MAC-momentum-1} or \eqref{MAC-momentum-2} on the associated half-volume is omitted. The condition $\bu_S\cdot\bm{\tau}_S$ is natural, as the term appears in the momentum balance equations for volumes $G_1$ adjacent to horizontal boundaries, through the integrals $\int_{b_1} -2\mu\varepsilon_{12}$ and
  $\int_{t_1} 2\mu \varepsilon_{12}$ in \eqref{MAC-momentum-1}, and volumes $G_2$ adjacent to vertical boundaries, through the integrals $\int_{l_2} -2\mu \varepsilon_{21}$ and $\int_{r_2} 2\mu \varepsilon_{21}$ in \eqref{MAC-momentum-2}. In particular, as the vertex $\mathcal V$ in Figure~\ref{fig-MAC} (right) is on the boundary, one or both of the expressions in \eqref{MAC-vel-2} are modified to involve the boundary value. For example, on a bottom boundary, the first equation in \eqref{MAC-vel-2} becomes $\frac{\d u_{S,1}}{\d y}(\mathcal V) = \frac2h(u_{S,1}^N - u_{S,1}(\mathcal V))$, which results in $u_{S,1}^1$ not being included in \eqref{G1}. Finally, both stress boundary conditions $(\bm{\sigma}_S \, \bn_S)\cdot \bn_S$ and $(\bm{\sigma}_S \, \bn_S)\cdot \bm{\tau}_S$ are natural. In particular, $(\bm{\sigma}_S \, \bn_S)\cdot \bn_S$ appears in the integrals $\int_{l_1}(-2\mu \varepsilon_{11} + p_S)$ and $\int_{r_1}(2\mu \varepsilon_{11} - p_S)$ in \eqref{MAC-momentum-1} on half-volumes $G_1$ adjacent to vertical boundaries, as well as in the integrals $\int_{b_2} (-2\mu \varepsilon_{22} + p_S)$ and 
  $\int_{t_2} (2\mu \varepsilon_{22} - p_S)$ in \eqref{MAC-momentum-2} on half-volumes $G_2$ adjacent to horizontal boundaries. For example, on a left boundary, $u_{S,1}^2$ and $p_S^1$ are not included in \eqref{G1}. Similarly, $(\bm{\sigma}_S \, \bn_S)\cdot \bm{\tau}_S$ appears in the integrals $\int_{b_1} -2\mu\varepsilon_{12}$ and $\int_{t_1} 2\mu \varepsilon_{12}$ in \eqref{MAC-momentum-1} on volumes $G_1$ adjacent to horizontal boundaries, as well as in integrals $\int_{l_2} -2\mu \varepsilon_{21}$ and $\int_{r_2} 2\mu \varepsilon_{21}$ in \eqref{MAC-momentum-2} on volumes $G_2$ adjacent to vertical boundaries. For example, on a bottom boundary, $u_{S,1}^1$ is not included in \eqref{G1}. 
}

\begin{figure}
    \centering
    \includegraphics[width=0.8\textwidth]{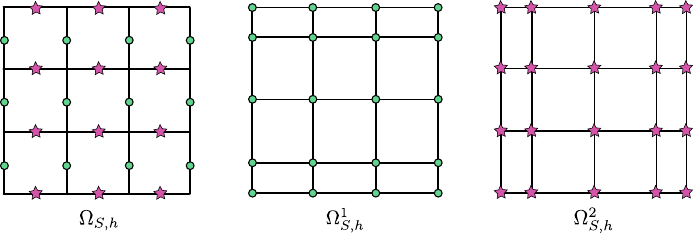}
    \caption{Primal MAC grid with velocity degrees of freedom (left) and staggered grids with degrees of freedom for the horizontal (center) and vertical (right) velocities.}
    \label{fig-MAC-MFE}
\end{figure}

\rev{
  We are now ready to formulate the mortar MAC-MFE method for the approximation of the variational problem \eqref{eq: variational formulation}: find $(\bu_S,p_S) \in \bV_S^{\rm MAC} \times W_S^{\rm MAC}$,
$(\bu_{D,h},p_{D,h}) \in \bV_{D,h}\times W_{D, h}$, and $\lambda_h \in \Lambda_h$ such that
\begin{subequations} \label{MAC-MFE}
\begin{align}
  & (\bu_S,p_S) \mbox{ satisfy the MAC equations \eqref{MAC-momentum-1}--\eqref{MAC-mass} in } \Omega_S \mbox{ with} \nonumber \\
  & \bu_S = 0 \text{ on } \partial \Omega_S \setminus \Gamma,
  (\bm{\sigma}_S \, \bn_S)\cdot \bn_S = - \lambda_h \text{ on } \Gamma,
  (\bm{\sigma}_S \, \bn_S)\cdot \bm{\tau}_S = - \alpha_{BJS} \, \bu_S \cdot \bm{\tau}_S \text{ on } \Gamma,  \label{MAC-scheme}\\
  & (\mu K^{-1} \bu_{D,h}, \bv_{D,h})_{\Omega_D} 
  - (p_{D,h}, \nabla \cdot \bv_{D,h})_{\Omega_D}
  + \lang \lambda_h, \bv_{D,h} \cdot \bn_D \rang_\Gamma = 0 \quad \forall \bv_{D,h} \in \bV_{D,h},\\
& (\nabla \cdot \bu_{D,h}, w_{D,h})_{\Omega_D} = (f_D, w_{D,h})_{\Omega_D} \quad \forall w_{D,h} \in W_{D,h},\\
  & \lang \bu_S \cdot \bn_S + \bu_{D,h} \cdot \bn_D, \xi_h \rang_{\Gamma} = 0 \quad \forall \xi_h \in \Lambda_h,\label{MAC-MFE-Gamma}
\end{align}
\end{subequations}
where $\bu_S\cdot\bn_S$ in \eqref{MAC-MFE-Gamma} is interpreted as a piecewise constant function associated with the MAC degrees of freedom on $\Gamma$. 
}

For the purpose of the analysis, we will utilize the reformulation of the MAC scheme as a conforming mixed finite element method for Stokes \cite{Han-Wu}. For
simplicity of the presentation, we focus on the two dimensional
case. The extension to three dimensions is natural. Starting from the primal grid and degrees of freedom, Figure~\ref{fig-MAC-MFE} (left), we consider two staggered grids $\Omega_{S, h}^i$, $i=1,2$, for the horizontal and vertical velocities, respectively, with vertices associated with their respective degrees of freedom, see Figure~\ref{fig-MAC-MFE} (center, right). Note that degrees of freedom have been included for the tangential velocity on the boundary of $\Omega_S$. The boundary velocities are determined from the Dirichlet boundary condition on the external boundary and are incorporated into the stress interface conditions on $\Gamma$. The values at the vertices allow
for constructing continuous bilinear functions on the two staggered
grids. Denote the corresponding spaces by $S_h^i$, $i=1,2$. Let
$\bV_{S,h} = (S_h^1\times S_h^2)\cap\bV_S$. \rev{We emphasize that, due to \eqref{defn-VS}, $\bv_{S,h} \in \bV_{S,h}$ satisfies $\bv_{S,h} = 0$ on $\partial \Omega_S \setminus \Gamma$.} The Stokes mixed
finite element pair is $\bV_{S,h}\times W_{S,h}$, where $W_{S,h}$
is the space of piecewise constant functions on the primal grid
$\Omega_{S, h}$.

\rev{For $\bu_{S,h} \in \bV_{S,h}$, let $\widetilde{\bm{\varepsilon}}(\bu_{S,h})$ be a modification of $\bm{\varepsilon}(\bu_{S,h})$ with
$$  
  \widetilde{\bm{\varepsilon}}(\bu_{S,h})_{12} = \frac{1}{2}\left(\frac{\d u_{S,h,1}}{\d y} + Q_1\frac{\d u_{S,h,2}}{\d x} \right) \quad \mbox{and} \quad
 \widetilde{\bm{\varepsilon}}(\bu_{S,h})_{21} = \frac{1}{2}\left(Q_2\frac{\d u_{S,h,1}}{\d y} + \frac{\d u_{S,h,2}}{\d x} \right), 
$$
 where $Q_1\frac{\d u_{S,h,2}}{\d x}$ and $Q_2\frac{\d u_{S,h,1}}{\d y}$ are defined as follows. For $Q_1$, consider $E_2 \in \Omega_{S,h}^2$ and split it as $E_2^b \cup E_2^t$ by the horizontal edges from $\Omega_{S,h}^1$. Noting that $\frac{\d u_{S,h,2}}{\d x} = \alpha + \beta y$ on $E_2$, we define $Q_1\frac{\d u_{S,h,2}}{\d x}|_{E_2}$ as the piecewise constant function satisfying $Q_1\frac{\d u_{S,h,2}}{\d x}|_{E_2^b} = \frac{\d u_{S,h,2}}{\d x}|_b$ and
$Q_1\frac{\d u_{S,h,2}}{\d x}|_{E_2^t} = \frac{\d u_{S,h,2}}{\d x}|_t$, where $b$ and $t$ denote the bottom and top edges of $E_2$, respectively. We define $Q_2\frac{\d u_{S,h,1}}{\d y}$ on $E_1 \in \Omega_{S,h}^1$ similarly, by splitting it by the vertical edges from $\Omega_{S,h}^2$.
}

\rev{
  For each element $E_i \in \Omega_{S, h}^i$, $i=1,2$, let $Q^1(E_i)$ denote the space of bilinear functions on $E_i$. For a function $\varphi$ with domain $E_i$ such that $\varphi$ is well defined at the vertices, let $I_{E_i}(\varphi) \in Q^1(E_i)$ interpolate $\varphi$ at the four vertices of $E_i$.} For each element $E \in \Omega_{S,h}$, let $I_{E}:C^0(E) \to P_0(E)$  \rev{interpolate} the function at the center of $E$, where $P_0(E)$ denotes the space of constant functions on $E$.
Motivated by \cite{Han-Wu}, we define the discrete bilinear forms
\begin{subequations}
\begin{align}
  & a_{S,h}(\bu_{S,h},\bv_{S,h}) :=
  \sum_{E_1\in \Omega_{S, h}^1}\int_{E_1} 2\mu I_{E_1}(\widetilde{\bm{\varepsilon}}(\bu_{S,h})_1
  \cdot (\rev{\nabla}\bv_{S,h})_1)
  + \sum_{E_2\in \Omega_{S, h}^2}\int_{E_2} 2\mu I_{E_2}(\widetilde{\bm{\varepsilon}}(\bu_{S,h})_2
  \cdot (\rev{\nabla}\bv_{S,h})_2) \nonumber \\
  & \quad + \sum_{E_1\in \Omega_{S, h}^1}\int_{\d E_1 \cap\Gamma^1} \alpha_{BJS}
  I_{E_1}(u_{S,h,1} \, v_{S,h,1})
  + \sum_{E_2\in \Omega_{S, h}^2}\int_{\d E_2 \cap\Gamma^2} \alpha_{BJS}
  I_{E_2}(u_{S,h,2} \, v_{S,h,2}), \label{a-sh}\\
  & b_{S,h}(\bv_{S,h},w_{S,h}) := - \sum_{E\in \Omega_{S, h}}\int_E
  I_E(\nabla\cdot \bv_{S,h}) w_{S,h}, \label{b-sh}
\end{align}
\end{subequations}
where $\bm{\varepsilon}(\bu_{S,h})_i$ is the $i$-th row of $\bm{\varepsilon}(\bu_{S,h})$, $(\nabla\bv_{S,h})_i$ is the $i$-th row of $\nabla\bv_{S,h}$, \rev{and we recall that $\Gamma^1$ and $\Gamma^2$ are, respectively, the horizontal and vertical parts of the interface $\Gamma$, see Figure~\ref{fig:domain}.
}

\rev{
	Finally, for incorporating the right-hand side, we define the interpolants $\cQ_i$, $i=1,2$, for $\varphi \in S_h^i$ such that $\cQ_i \varphi$ is constant on each control volume $G_i$, defined from the value of $\varphi$ at the vertex of $\Omega_{s, h}^i$ interior to $G_i$. The combined interpolant is denoted by $\cQ = (\cQ_1, \cQ_2)$.
}

\rev{
\begin{lemma}\label{lem:MAC=MFE}
  The MAC scheme \eqref{MAC-scheme} is equivalent to the following mixed finite element method: find $(\bu_{S,h},p_{S,h}) \in \bV_{S,h}\times W_{S,h}$ such that
  \begin{subequations}
  \begin{align}
    a_{S,h}(\bu_{S,h},\bv_{S,h}) + b_{S,h}(\bv_{S,h},p_{S,h}) + \lang \lambda_h,\bv_{S,h}\cdot\bn_S \rang_{\Gamma} 
    &=  (\bm{f}_S, \cQ \bv_{S,h})_{\Omega_S}, & \forall \bv_{S,h} \in \bV_{S,h}, \label{MAC-1}\\
    b_{S,h}(\bu_{S,h},w_{S,h}) &= 0, & \forall w_{S,h} \in W_{S,h}. \label{MAC-2}
  \end{align}
  \end{subequations}
\end{lemma}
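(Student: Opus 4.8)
The plan is to prove the equivalence by a direct, term-by-term computation: we pick convenient bases for the discrete spaces, evaluate each bilinear form in \eqref{MAC-1}--\eqref{MAC-2} on those basis functions, and check that the resulting equations are exactly the MAC stencils \eqref{G1}, \eqref{G2}, \eqref{MAC-mass} together with their boundary modifications. Since the nodal bilinear functions $\phi_i^1 \in S_h^1$ and $\phi_j^2 \in S_h^2$ associated with the velocity degrees of freedom span $\bV_{S,h}$ once the nodes on $\partial\Omega_S\setminus\Gamma$ are discarded, and the characteristic functions $\chi_E$ of the primal cells span $W_{S,h}$, it suffices to test \eqref{MAC-1} with $(\phi_i^1,0)$ and $(0,\phi_j^2)$ and \eqref{MAC-2} with $\chi_E$. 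The one elementary fact used repeatedly is that the integral of a bilinear function over a rectangle equals the area times the average of its four vertex values; consequently the operators in \eqref{a-sh}--\eqref{b-sh} act as the vertex (trapezoidal) quadrature rule, $\int_{E_i} I_{E_i}(g) = \tfrac14|E_i|\sum_{\text{vertices of }E_i} g$, while $\int_E I_E(g) = |E|\,g(\text{center of }E)$.

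The mass equation is immediate. Testing \eqref{MAC-2} with $\chi_E$ gives $|E|\,(\nabla\cdot\bu_{S,h})(\text{center of }E) = 0$, and since $u_{S,h,1}$ and $u_{S,h,2}$ are bilinear on their staggered cells, $\d_x u_{S,h,1}$ and $\d_y u_{S,h,2}$ at the center of $E$ are precisely the difference quotients in \eqref{MAC-vel-1}; this is exactly \eqref{MAC-mass}.

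For an interior horizontal-velocity node $i$ (node $3$ in Figure~\ref{fig-MAC-momentum}), test \eqref{MAC-1} with $(\phi_i^1,0)$. In $a_{S,h}$ only the first and third sums of \eqref{a-sh} can be nonzero, and the first splits, by linearity of $I_{E_1}$, into a term $\int_{E_1}2\mu I_{E_1}(\d_x u_{S,h,1}\,\d_x\phi_i^1)$ and a cross term $\int_{E_1}2\mu I_{E_1}(\widetilde{\bm\varepsilon}(\bu_{S,h})_{12}\,\d_y\phi_i^1)$. On the four staggered cells in $\operatorname{supp}\phi_i^1$ the vertex quadrature makes the diagonal nodes drop out and telescopes the first term to the $x$-second difference $2\mu(-u_{S,1}^2+2u_{S,1}^3-u_{S,1}^4)$; the $\tfrac12\d_y u_{S,h,1}$ part of $\widetilde{\bm\varepsilon}_{12}$ telescopes similarly to $\mu(-u_{S,1}^1+2u_{S,1}^3-u_{S,1}^5)$, and the $\tfrac12 Q_1\d_x u_{S,h,2}$ part produces the cross contributions $\mu(u_{S,2}^2-u_{S,2}^1)-\mu(u_{S,2}^4-u_{S,2}^3)$. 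Here the projection $Q_1$ is exactly what is needed: $\d_x u_{S,h,2}$ is bilinear and hence not constant along the vertical edges of the $\Omega_{S,h}^1$-cells, while the MAC scheme \eqref{MAC-vel-2} uses its value at the primal vertices; replacing it by the piecewise-constant $Q_1\d_x u_{S,h,2}$ is precisely what makes the vertex quadrature return those vertex values. The form $b_{S,h}$ contributes $-\sum_E|E|\,(\d_x\phi_i^1)(\text{center of }E)\,p_S^E = h(p_S^2-p_S^1)$, since $\d_x\phi_i^1$ equals $\mp 1/h$ at the centers of the two primal cells adjacent to the edge carrying node $i$ and $0$ at all others; and the right-hand side is $(\bm f_S,\cQ(\phi_i^1,0))_{\Omega_S} = \int_{G_1}\bm f_{S,1}$, because $\cQ_1\phi_i^1$ is the characteristic function of the control volume $G_1$ of node $i$. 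Assembling these pieces reproduces \eqref{G1} exactly, and testing with $(0,\phi_j^2)$ reproduces \eqref{G2} in the same manner.

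The remaining work, and the part I expect to be the most delicate, is matching the equations for nodes adjacent to the boundary. For nodes on or next to a vertical part $\Gamma^2$ of $\Gamma$, the term $\lang\lambda_h,\bv_{S,h}\cdot\bn_S\rang_\Gamma$ supplies exactly what the MAC scheme obtains by inserting the natural condition $(\bm\sigma_S\bn_S)\cdot\bn_S=-\lambda_h$ into the face-stress integrals $\int_{l_1}(-2\mu\eps_{11}+p_S)$ and $\int_{r_1}(2\mu\eps_{11}-p_S)$ of \eqref{MAC-momentum-1}, and symmetrically along $\Gamma^1$ in \eqref{MAC-momentum-2}; likewise the third and fourth sums in \eqref{a-sh} realize the substitution of the BJS condition $(\bm\sigma_S\bn_S)\cdot\bm\tau_S = -\alpha_{BJS}\,\bu_S\cdot\bm\tau_S$ into the tangential face-stress integrals $\int_{b_1}-2\mu\eps_{12}$, $\int_{t_1}2\mu\eps_{12}$ and their $G_2$ analogues. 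For nodes adjacent to $\partial\Omega_S\setminus\Gamma$, the Dirichlet data is built into $\bV_{S,h} = (S_h^1\times S_h^2)\cap\bV_S$, and one checks that this is exactly what produces the modified one-sided difference quotients (e.g.\ $\d_y u_{S,h,1}(\mathcal V) = \tfrac2h(u_{S,1}^N - u_{S,1}(\mathcal V))$) and the accompanying omission of degrees of freedom discussed after \eqref{MAC-mass}, while $\cQ$ correctly weights the adjacent half control volumes. Since there are only finitely many boundary configurations (the four sides of the external boundary and the horizontal and vertical segments of $\Gamma$) to enumerate, this step is a careful case analysis with no new difficulty, and completes the proof.
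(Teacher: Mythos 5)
Your proposal is correct and follows essentially the same route as the paper's proof: test \eqref{MAC-1} with the nodal basis functions of $S_h^1$ and $S_h^2$ and \eqref{MAC-2} with the characteristic functions of the primal cells, identify the resulting equations with the stencils \eqref{G1}, \eqref{G2}, \eqref{MAC-mass}, and observe that the interface and BJS terms arise as natural conditions producing the same boundary modifications. You simply carry out in detail the vertex-quadrature computation that the paper dismisses as ``a simple calculation,'' including the useful observation of why the projections $Q_1$, $Q_2$ in $\widetilde{\bm{\varepsilon}}$ are exactly what reconciles the quadrature with the MAC difference quotients \eqref{MAC-vel-2}.
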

}
\rev{
  \begin{proof}
A simple calculation shows that \eqref{MAC-1} with $\bv_{S,h}$ the basis function in $S_h^1$ associated with the vertex of $\Omega_{S,h}^1$ at the degree of freedom $u_{S,1}^3$ in Figure~\ref{fig-MAC-momentum} (left) results in \eqref{G1}. Similarly, \eqref{MAC-1} with $\bv_{S,h}$ the basis function in $S_h^2$ associated with the vertex of $\Omega_{S,h}^2$ at the degree of freedom $u_{S,2}^3$ in Figure~\ref{fig-MAC-momentum} (right) results in \eqref{G2}. 

One can also check that, adjacent to the external boundary and the interface $\Gamma$, the MAC equations \eqref{MAC-momentum-1}--\eqref{MAC-momentum-2} and the MFE equation \eqref{MAC-1} result in the same modification of \eqref{G1} and \eqref{G2}. In particular,
since the stress interface conditions in \eqref{MAC-scheme} are natural, the summation of \eqref{MAC-momentum-1} and \eqref{MAC-momentum-2} results in the interface terms
\begin{equation}\label{MAC-interface}
  \int_\Gamma -(\bm{\sigma}_S \, \bn_S)\cdot \bn_S = \int_\Gamma \lambda_h \quad \mbox{and} \quad
  \int_\Gamma -(\bm{\sigma}_S \, \bn_S)\cdot \bm{\tau}_S = \int_\Gamma \alpha_{BJS} \, \bu_S \cdot \bm{\tau}_S,
\end{equation}
which correspond to the interface terms that appear in \eqref{MAC-1}. 
Finally, \eqref{MAC-2} with $w_{S,h}$ the basis function in $W_{S,h}$ associated with element $E$ results in the mass balance equation \eqref{MAC-mass}. 
\end{proof}
}

\rev{The equivalence established in Lemma~\ref{lem:MAC=MFE} allows us to rewrite the mortar MAC-MFE method \eqref{MAC-MFE} as a mortar mixed finite element method.} Let 
$\bV_h := \bV_{S, h} \times \bV_{D, h}$, $W_h := W_{S, h} \times W_{D, h}$, 
\begin{align*}
	a_h(\bu_h; \bv_h) &:= a_{S,h}(\bu_{S,h},\bv_{S,h})
	+ a_D(\bu_{D,h};\bv_{D,h}), &
	b_h(\bv_h,w_h) &:= b_{S,h}(\bv_{S,h},w_{S,h}) + b_{D}(\bv_{D,h},w_{D,h}).
\end{align*}
\rev{Due to Lemma~\ref{lem:MAC=MFE}, the mortar MAC-MFE method \eqref{MAC-MFE} is equivalent to the following mortar MFE method:} find
$(\bu_h, p_h, \lambda_h) \in \bV_h \times W_h \times \Lambda_h$ such that
\begin{subequations}\label{weak-h}
  \begin{align}
    a_h(\bu_h, \bv_h) + b_h(\bv_h, p_h) + b_{\Gamma}(\bv_h,\lambda_h) &=
    (\bm{f}_S, \rev{\cQ} \bv_{S,h})_{\Omega_S}, &
    \forall \bv_h &\in \bV_h, \label{weak-h-1} \\
    b_h(\bu_h,w_h) &= -(f_D, w_{D,h})_{\Omega_D}, &
    \forall w_h &\in W_h, \label{weak-h-2} \\
    b_{\Gamma}(\bu_h,\xi_h) &= 0, &
    \forall \xi_h &\in \Lambda_h. \label{weak-h-3}
  \end{align}
\end{subequations}

\section{Well posedness}
\label{sec:well-posed}

We begin with stating results from the literature for interpolants in
the Stokes and Darcy velocity spaces and local inf-sup stability that
will be used in the analysis. It is shown in \cite{Han-Wu} that there
exists an interpolant $\Pi_{S,h}:\bV_S \to \bV_{S,h}$, where
$\Pi_{S,h}\bv_S = (\Pi_{S,h}^1 v_{S,1},\Pi_{S,h}^2 v_{S,2}) \in
S_h^1\times S_h^2$ such that for all sufficiently smooth $\bv_S \in
\bV_S$,
\begin{subequations}\label{interp-MAC}
  \begin{align}
    b_{S,h}(\Pi_{S,h}\bv_S, w_{S,h}) &= b_S(\bv_S,w_{S,h}),
    \quad \forall \,w_{S,h} \in W_{S,h}, \label{PiSh-div} \\
    \|\bv_S - \Pi_{S,h}\bv_S\|_{1,\Omega_S} &\lesssim h |\bv_S|_{2,\Omega_S},
    \label{PiSh-approx}\\
    \|\Pi_{S,h}\bv_S\|_{1,\Omega_S} &\lesssim \|\bv_S\|_{1,\Omega_S}.
    \label{PiSh-cont}
    \end{align}
\end{subequations}
Furthermore, the following continuity and inf-sup condition hold:
\begin{subequations}
  \begin{align}
    b_{S,h}(\bv_{S,h},w_{S,h}) &\lesssim \|\bv_{S,h}\|_{1,\Omega_S}\|w_{S,h}\|_{\Omega_S}, \quad
    && \forall \, \bv_{S,h} \in \bV_{S,h}, \, w_{S,h} \in W_{S,h}, \label{cont-b-MAC} \\
    \sup_{\bv_{S,h} \in \bV_{S,h}\setminus 0}\frac{b_{S,h}(\bv_{S,h},w_{S,h})}{\|\bv_{S,h}\|_{1,\Omega_S}}
    &\gtrsim \|w_{S,h}\|_{\Omega_S}, \quad
    && \forall w_{S,h} \in W_{S,h}.
    \label{inf-sup-MAC}
\end{align}
\end{subequations}

\rev{We next establish continuity and coercivity for the bilinear form $a_{S,h}(\bu_{S,h},\bv_{S,h})$. Let
$$
  \bZ_{S,h} = \{\bv_{S,h} \in \bV_{S,h}: b_{S,h}(\bv_{S,h},w_{S,h}) = 0 \ \ \forall w_{S,h} \in W_{S,h}\}.
$$  
\begin{lemma}\label{lem:a-MAC}
It holds that
\begin{subequations} 
 \begin{align}
    a_{S,h}(\bu_{S,h},\bv_{S,h}) &\lesssim \|\bu_{S,h}\|_{1,\Omega_S}\|\bv_{S,h}\|_{1,\Omega_S}, \quad
    && \forall \, \bu_{S,h},\bv_{S,h} \in \bV_{S,h},
    \label{cont-a-MAC}    \\
    a_{S,h}(\bv_{S,h},\bv_{S,h}) &\gtrsim \|\bv_{S,h}\|_{1,\Omega_S}^2, \quad
    && \forall \, \bv_{S,h} \in \bZ_{S,h}. \label{coer-a-MAC}
\end{align}
\end{subequations}
\end{lemma}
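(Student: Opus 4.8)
The statement has two parts: the continuity bound \eqref{cont-a-MAC}, valid on all of $\bV_{S,h}$, and the coercivity bound \eqref{coer-a-MAC} on the discretely divergence-free subspace $\bZ_{S,h}$; I would establish them separately.

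For \eqref{cont-a-MAC} I would bound the four groups of terms in \eqref{a-sh} one at a time. A volume contribution has the form $\int_{E_i}2\mu\,I_{E_i}\big(\widetilde{\bm{\varepsilon}}(\bu_{S,h})_i\cdot(\nabla\bv_{S,h})_i\big)$, the integral over a single element of a vertex interpolant of a product of low-degree polynomials. Since $\widetilde{\bm{\varepsilon}}$ uses only first derivatives of $\bu_{S,h}$, and the vertex interpolant $I_{E_i}$ and the sub-cell averaging operators $Q_1,Q_2$ are bounded in $L^2(E_i)$ uniformly in $h$ (by scaling to a reference element and equivalence of norms on the fixed finite-dimensional spaces involved), a Cauchy--Schwarz estimate on $E_i$ bounds the term by $\mu\,|\bu_{S,h}|_{1,E_i}|\bv_{S,h}|_{1,E_i}$; summing over elements handles the volume part. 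For the interface terms, on each edge $\partial E_i\cap\Gamma^i$ the function $I_{E_i}(u_{S,h,i}v_{S,h,i})$ is the endpoint-linear interpolant of the (quadratic) edge restriction of $u_{S,h,i}v_{S,h,i}$, hence comparable on a fixed finite-dimensional space to $u_{S,h,i}v_{S,h,i}$ itself; thus these terms are bounded by $\alpha_{BJS}\|\bu_{S,h}\|_{0,\Gamma}\|\bv_{S,h}\|_{0,\Gamma}$, and the trace inequality $\|\cdot\|_{0,\Gamma}\lesssim\|\cdot\|_{1,\Omega_S}$ completes \eqref{cont-a-MAC}.

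For \eqref{coer-a-MAC} the plan is to adapt the argument of \cite{Han-Wu}. The vertex rule $\int_E I_E(\cdot)=\frac{|E|}{4}\sum_{\text{vertices}}(\cdot)$ is positive and exact on bilinears, so the diagonal contributions $\sum_{E_1}\int_{E_1}2\mu\,I_{E_1}\big((\partial_x v_{S,h,1})^2\big)$, $\sum_{E_2}\int_{E_2}2\mu\,I_{E_2}\big((\partial_y v_{S,h,2})^2\big)$ and the two interface terms are nonnegative and, by norm equivalence on the relevant (scaled, finite-dimensional) polynomial spaces, comparable to $\|\partial_x v_{S,h,1}\|_{0,\Omega_S}^2$, $\|\partial_y v_{S,h,2}\|_{0,\Omega_S}^2$ and $\|\bv_{S,h}\cdot\bm{\tau}_S\|_{0,\Gamma}^2$, respectively. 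The heart of the matter is the off-diagonal part, $\sum_{E_1}\int_{E_1}\mu\,I_{E_1}\big((\partial_y v_{S,h,1}+Q_1\partial_x v_{S,h,2})\,\partial_y v_{S,h,1}\big)+\sum_{E_2}\int_{E_2}\mu\,I_{E_2}\big((Q_2\partial_y v_{S,h,1}+\partial_x v_{S,h,2})\,\partial_x v_{S,h,2}\big)$, which is a priori sign-indefinite because the two staggered grids do not coincide. The operators $Q_1,Q_2$ are designed precisely so that, via a discrete integration by parts on the staggered grids, this part can be rewritten as a mesh-dependent gradient seminorm $\|\nabla\bv_{S,h}\|_*^2$ minus the diagonal pieces, plus a discrete-divergence contribution $\propto\sum_{E}\big(I_E(\nabla\cdot\bv_{S,h})\big)^2$, plus boundary sums over $\partial\Omega_S$; this is the same algebraic mechanism that underlies Lemma~\ref{lem:MAC=MFE}.

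Restricting to $\bv_{S,h}\in\bZ_{S,h}$ kills the discrete-divergence contribution, since $I_E(\nabla\cdot\bv_{S,h})=0$ for every $E$. The boundary sums over $\partial\Omega_S\setminus\Gamma$ vanish because $\bv_{S,h}=0$ there, and the sums over $\Gamma$ are absorbed into the nonnegative interface (BJS) terms and a small fraction of $\|\nabla\bv_{S,h}\|_*^2$ through a discrete trace inequality and Young's inequality --- exactly mirroring the continuous coercivity argument for $a_S$ in \cite{LSY}. This gives $a_{S,h}(\bv_{S,h},\bv_{S,h})\gtrsim\|\nabla\bv_{S,h}\|_*^2$, and norm equivalence on the piecewise-bilinear space gives $\|\nabla\bv_{S,h}\|_*^2\gtrsim|\bv_{S,h}|_{1,\Omega_S}^2$. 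Finally, since every $\bv_{S,h}\in\bV_{S,h}$ vanishes on $\partial\Omega_S\setminus\Gamma$, which has positive $(n-1)$-dimensional measure, the Friedrichs--Poincar\'e inequality upgrades this to $a_{S,h}(\bv_{S,h},\bv_{S,h})\gtrsim\|\bv_{S,h}\|_{1,\Omega_S}^2$, which is \eqref{coer-a-MAC}. I expect the main obstacle to be the discrete integration by parts on the staggered grids and the reassembly of the modified off-diagonal terms; this is where the constructions of $Q_1$, $Q_2$ and of the vertex interpolant $I_{E_i}$ are used in full, and it is also the reason the reduction to $\bZ_{S,h}$ is needed, the modified discrete symmetric gradient not satisfying a discrete Korn inequality on its own.
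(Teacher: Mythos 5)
Your continuity argument, and your overall strategy for \eqref{coer-a-MAC} --- use the constraint $\bv_{S,h}\in\bZ_{S,h}$ to turn the modified symmetric-gradient form into a full-gradient form, then invoke positivity of the vertex quadrature rule and a Friedrichs--Poincar\'e inequality --- coincide with the paper's. The difference lies in how the cross-derivative terms are eliminated, and this is where your plan has a gap. The paper does \emph{not} perform a global discrete integration by parts. It observes that the local mass balance \eqref{MAC-mass} on the two primal cells straddled by a control volume $G_1$ yields the identities $u_{S,2}^2-u_{S,2}^4=u_{S,1}^4-u_{S,1}^3$ and $u_{S,2}^3-u_{S,2}^1=u_{S,1}^2-u_{S,1}^3$ (notation of Figure~\ref{fig-MAC-momentum}), which convert the $u_{S,2}$-differences in \eqref{G1} \emph{exactly} into $u_{S,1}$-differences, control volume by control volume, including those adjacent to $\partial\Omega_S$ and $\Gamma$. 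This gives the exact identity \eqref{a-sh-new}: for $\bu_{S,h}\in\bZ_{S,h}$ the form equals $\mu$ times the full-gradient form under vertex quadrature plus the unchanged, nonnegative BJS terms, with \emph{no} residual boundary sums. Coercivity then follows from \cite[Lemma~2.4]{WY-MPFA} and Poincar\'e; no absorption step is needed.

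Your route instead produces boundary sums on $\Gamma$ that you propose to absorb ``into the nonnegative BJS terms and a small fraction of the gradient seminorm through a discrete trace inequality and Young's inequality.'' That step does not go through as stated. Moving the tangential derivative in the cross term $\int\partial_x v_{S,h,2}\,\partial_y v_{S,h,1}$ onto the other factor generates, on a horizontal piece $\Gamma^1$ of the interface, a term of the form $\int_{\Gamma^1} v_{S,h,2}\,\partial_x v_{S,h,1}$: the \emph{normal} velocity multiplied by a tangential derivative of the tangential velocity. The BJS terms control only $\|v_{S,h,1}\|_{\Gamma^1}^2$, and bounding $\partial_x v_{S,h,1}$ on $\Gamma$ by an interior gradient norm costs a factor $h^{-1/2}$ by the trace/inverse inequality, so Young's inequality cannot absorb this contribution uniformly in $h$. (This is also why the continuous coercivity of $a_S$ in \cite{LSY} is obtained via Korn's inequality rather than by integration by parts plus absorption.) The remedy is precisely the paper's local argument: because the mass balance holds on \emph{every} primal cell, including those touching $\Gamma$, the cancellation is exact everywhere and no boundary remainder ever appears. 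Your closing diagnosis --- that the reduction to $\bZ_{S,h}$ is needed because the modified discrete symmetric gradient does not satisfy a discrete Korn inequality --- is correct; the cancellation just has to be implemented locally rather than globally.
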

\begin{proof}
The continuity bound \eqref{cont-a-MAC} follows easily from the definition \eqref{a-sh}. For the coercivity bound, consider the equation \eqref{G1} with $\bu_{S,h} \in \bZ_{S,h}$. Since $u_{S,2}^2 - u_{S,2}^4 =  u_{S,1}^4 - u_{S,1}^3$ and $u_{S,2}^3 - u_{S,2}^1 =  u_{S,1}^2 - u_{S,1}^3$, we obtain that for the choice of $\bv_{S,h}$ in \eqref{G1}, 
$$
a_{S,h}(\bu_{S,h},\bv_{S,h}) = \mu(-u_{S,1}^2 + 2u_{S,1}^3 - u_{S,1}^4) + \mu(-u_{S,1}^1 + 2u_{S,1}^3 - u_{S,1}^5) = \sum_{E_1\in \Omega_{S, h}^1}\int_{E_1} \mu I_{E_1}((\nabla \bu_{S,h})_1
  \cdot (\nabla\bv_{S,h})_1).
$$
Similarly, for the choice of $\bv_{S,h}$ in \eqref{G2}, 
$$
a_{S,h}(\bu_{S,h},\bv_{S,h}) = \mu(-u_{S,2}^1 + 2u_{S,2}^3 - u_{S,2}^5) + \mu(-u_{S,2}^2 + 2u_{S,2}^3 - u_{S,2}^4) = \sum_{E_2\in \Omega_{S, h}^2}\int_{E_2} \mu I_{E_2}((\nabla \bu_{S,h})_2
  \cdot (\nabla\bv_{S,h})_2).
$$
  A similar modification holds for a test function $\bv_{S,h}$ with support adjacent to $\partial \Omega_1$, implying that for $\bu_{S,h} \in \bZ_{S,h}$
\begin{align}
& a_{S,h}(\bu_{S,h},\bv_{S,h}) =
  \sum_{E_1\in \Omega_{S, h}^1}\int_{E_1} \mu I_{E_1}((\nabla\bu_{S,h})_1
  \cdot (\nabla\bv_{S,h})_1)
  + \sum_{E_2\in \Omega_{S, h}^2}\int_{E_2} \mu I_{E_2}((\nabla\bu_{S,h})_2
  \cdot (\nabla\bv_{S,h})_2) \nonumber \\
  & \quad + \sum_{E_1\in \Omega_{S, h}^1}\int_{\d E_1 \cap\Gamma^1} \alpha_{BJS}
  I_{E_1}(u_{S,h,1} \, v_{S,h,1})
  + \sum_{E_2\in \Omega_{S, h}^2}\int_{\d E_2 \cap\Gamma^2} \alpha_{BJS}
  I_{E_2}(u_{S,h,2} \, v_{S,h,2}). \label{a-sh-new}
\end{align}
Therefore, noting that $\displaystyle\int_{E_i} I_{E_i}(\cdot)$ corresponds to employing the vertex quadrature rule, a simple calculation, see \cite[Lemma~2.4]{WY-MPFA}, gives that for all $\bv_{S,h} \in \bZ_{S,h}$
$$
a_{S,h}(\bv_{S,h},\bv_{S,h}) \gtrsim \|\nabla\bv_{S,h}\|_{\Omega_S}^2 \gtrsim \|\bv_{S,h}\|_{1,\Omega_S}^2,
$$
where the last inequality follows from the Poincar\'e inequality.
\end{proof}
}

For the Darcy problem, it is well known \cite{Brezzi-Fortin} that for
stable mixed finite element pairs, there exists an interpolant
$\Pi_{D,h}: \bV_D \cap H^1(\Omega_D) \to \bV_{D,h}$ such that for all $\bv_D \in H^1(\Omega_D)$,
\begin{subequations}\label{interp-Darcy}
  \begin{align}
    b_{D}(\Pi_{D,h}\bv_D, w_{D,h}) &= b_D(\bv_D,w_{D,h}), \quad
    \forall \,w_{D,h} \in W_{D,h}, \label{PiDh-div} \\
    \|\bv_D - \Pi_{D,h}\bv_D\|_{\Omega_D} &\lesssim h |\bv_D|_{1,\Omega_D},
    \label{PiDh-approx}\\
    \|\Pi_{D,h}\bv_D\|_{\Omega_D} &\lesssim \|\bv_D\|_{1,\Omega_D}.
    \label{PiDh-cont}
    \end{align}
\end{subequations}
Furthermore, the following continuity, coercivity,
and inf-sup condition hold:
\begin{subequations}
  \begin{align}
		a_{D}(\bu_{D,h},\bv_{D,h}) &\lesssim \|\bu_{D,h}\|_{\Omega_D}\|\bv_{D,h}\|_{\Omega_D}, \quad
    && \forall \, \bu_{D,h},\bv_{D,h} \in \bV_{D,h},
    \label{cont-a-Darcy}    \\
    a_{D}(\bv_{D,h},\bv_{D,h}) &\gtrsim \|\bv_{D,h}\|_{\Omega_D}^2, \quad
    && \forall \, \bv_{D,h} \in \bV_{D,h}, \label{coer-a-Darcy} \\
    b_{D}(\bv_{D,h},w_{D,h}) &\lesssim \|\bv_{D,h}\|_{\div;\Omega_D}\|w_{D,h}\|_{\Omega_D}, \quad
    && \forall \, \bv_{D,h} \in \bV_{D,h}, \, w_{D,h} \in W_{D,h}, \label{cont-b-Darcy} \\
    \sup_{\bv_{D,h} \in \bV_{D,h}\setminus 0}\frac{b_{D}(\bv_{D,h},w_{D,h})}{\|\bv_{D,h}\|_{\div;\Omega_D}}
    &\gtrsim \|w_{D,h}\|_{\Omega_D}, \quad
    && \forall w_{D,h} \in W_{D,h}.
    \label{inf-sup-Darcy}
  \end{align}
  \end{subequations}

We next discuss the choice of $\Lambda_h$. In order to simplify the presentation, we define
\begin{align}\label{L-Vn}
	\Lambda_h = \bV_{D,h}\cdot\bn|_{\Gamma},
\end{align}
which allows us to utilize the arguments from \cite{LSY}. \rev{With this choice, the following interface inf-sup condition holds \cite{ambartsumyan2019nonlinear}:
  \begin{equation}\label{inf-sup-b-gamma}
\inf_{\xi_h \in \Lambda_h \setminus 0} \ \sup_{\bv_h \in \bV_{h} \setminus 0}
\frac{b_{\Gamma}(\bv_h,\xi_h)}{\|\bv_{h}\|_V \| \xi_h \|_\Gamma} \gtrsim 1.
  \end{equation}
}
We note that a more general choice of $\Lambda_h$ is also possible. In particular, $\Lambda_h$
may consist of continuous or discontinuous polynomials of degree $m \ge 1$ on a
mesh $\Gamma_h$ different from the subdomain grids, satisfying for all
$\xi_h \in \Lambda_h$,
\begin{align}\label{L<Vn}
\|\xi_h\|_{\Gamma} \lesssim \|P_{D,h}\xi_h\|_{\Gamma},
\end{align}
where $P_{D,h}$ is the $L^2$-orthogonal projection onto
$\bV_{D,h}\cdot\bn|_{\Gamma}$.  For the treatment of this more general
choice, we refer the reader to \cite{GVY}, see also \cite{ACWY}. 

\rev{For the purpose of the analysis,} following \cite{LSY}, we consider \rev{a reduced} formulation of \eqref{weak-h} in the weakly continuous velocity space
\begin{align}\label{defn-Vh0}
	\bV_{h,c} := \{ \bv_h \in \bV_h: b_{\Gamma}(\bv_h,\xi_h) = 0 \,\,
	\forall \, \xi_h \in \Lambda_h\}.
\end{align}
The reduced problem is: find $(\bu_h, p_h) \in \bV_{h,c} \times W_h$ such that
\begin{subequations}\label{weak-h-0}
  \begin{align}
    a_h(\bu_h, \bv_h) + b_h(\bv_h, p_h) &=
    (\bm{f}_S, \rev{\cQ} \bv_{S,h})_{\Omega_S}, 
    & \forall \bv_h &\in \bV_{h,c},
    \label{weak-h-0-1} \\
    b_h(\bu_h,w_h) &= -(f_D, w_{D,h})_{\Omega_D}, 
    & \forall w_h &\in W_h.
    \label{weak-h-0-2}
  \end{align}
\end{subequations}

\rev{
\begin{lemma}\label{lem:equiv}
Method \eqref{weak-h} is equivalent to method \eqref{weak-h-0} in the following sense. For any solution $(\bu_h, p_h,\lambda_h)$ to \eqref{weak-h}, $(\bu_h, p_h)$ is a solution to \eqref{weak-h-0}. Conversely, for any solution $(\bu_h, p_h)$ to \eqref{weak-h-0}, there exists a unique $\lambda_h \in \Lambda_h$ such that $(\bu_h, p_h,\lambda_h)$ is a solution to \eqref{weak-h}.
\end{lemma}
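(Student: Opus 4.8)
The plan is to prove the equivalence in two directions, exploiting the standard structure of mortar mixed methods as in \cite{LSY}. In the forward direction, suppose $(\bu_h, p_h, \lambda_h)$ solves \eqref{weak-h}. Equation \eqref{weak-h-3} says precisely that $b_\Gamma(\bu_h, \xi_h) = 0$ for all $\xi_h \in \Lambda_h$, i.e.\ $\bu_h \in \bV_{h,c}$; so \eqref{weak-h-2} restricted to $w_h \in W_h$ is exactly \eqref{weak-h-0-2}. For \eqref{weak-h-0-1}, I would restrict the test functions in \eqref{weak-h-1} to $\bv_h \in \bV_{h,c}$: for such $\bv_h$ the term $b_\Gamma(\bv_h, \lambda_h)$ vanishes by definition of $\bV_{h,c}$, leaving exactly \eqref{weak-h-0-1}. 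This direction is immediate and requires no new machinery.

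The converse direction is where the work lies. Given a solution $(\bu_h, p_h)$ to \eqref{weak-h-0}, I need to produce $\lambda_h \in \Lambda_h$ so that \eqref{weak-h-1}--\eqref{weak-h-3} all hold. Equation \eqref{weak-h-3} holds because $\bu_h \in \bV_{h,c}$, and \eqref{weak-h-2} for general $w_h \in W_h$ follows from \eqref{weak-h-0-2} (the two problems have the same pressure space $W_h$, and $f_D$ enters only through $w_{D,h}$). For \eqref{weak-h-1}, define the linear functional $\ell(\bv_h) := (\f_S, \cQ \bv_{S,h})_{\Omega_S} - a_h(\bu_h, \bv_h) - b_h(\bv_h, p_h)$ on $\bV_h$. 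By \eqref{weak-h-0-1}, $\ell$ vanishes on $\bV_{h,c}$, which is the kernel of the operator $\bv_h \mapsto b_\Gamma(\bv_h, \cdot)$ from $\bV_h$ to $\Lambda_h'$. Hence $\ell$ factors through the quotient $\bV_h / \bV_{h,c}$, and by the interface inf-sup condition \eqref{inf-sup-b-gamma} the map $\bv_h \mapsto b_\Gamma(\bv_h, \cdot)$ is surjective onto $\Lambda_h'$ (equivalently, has closed range with the quotient norm equivalent to the $\Lambda_h$-dual norm). Therefore there exists a unique $\lambda_h \in \Lambda_h$ with $b_\Gamma(\bv_h, \lambda_h) = \ell(\bv_h)$ for all $\bv_h \in \bV_h$, which is precisely \eqref{weak-h-1}; uniqueness of $\lambda_h$ follows again from \eqref{inf-sup-b-gamma}, since two choices would differ by an element of $\Lambda_h$ annihilated by $b_\Gamma(\bv_h, \cdot)$ for all $\bv_h$, forcing it to be zero.

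The main obstacle, such as it is, is the clean invocation of the inf-sup condition \eqref{inf-sup-b-gamma} to guarantee both existence and uniqueness of the Lagrange multiplier: one must observe that $\bV_{h,c}$ is exactly the polar/kernel associated with $b_\Gamma$, that \eqref{inf-sup-b-gamma} is equivalent to the surjectivity of the induced operator $\bV_h \to \Lambda_h'$, and that a functional vanishing on the kernel of a surjection between finite-dimensional spaces is represented by a unique element of the target. I would phrase this as a standard consequence of the closed range theorem (or, in the finite-dimensional setting, elementary linear algebra) and cite \cite{LSY} for the analogous argument in the continuous setting. Everything else is bookkeeping about which test spaces and right-hand sides the two formulations share.
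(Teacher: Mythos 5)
Your proof is correct and follows essentially the same route as the paper: the forward direction by restricting test functions to $\bV_{h,c}$, and the converse by invoking the interface inf-sup condition \eqref{inf-sup-b-gamma} to recover a unique $\lambda_h$. The paper simply states the existence/uniqueness of $\lambda_h$ as a direct consequence of \eqref{inf-sup-b-gamma}, whereas you spell out the underlying functional-analytic argument; the content is the same.
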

}
\rev{
\begin{proof}
Let $(\bu_h, p_h,\lambda_h)$ be a solution to \eqref{weak-h}. Equation \eqref{weak-h-3} implies that $\bu_h \in \bV_{h,c}$. Taking $\bv_h \in \bV_{h,c}$ implies \eqref{weak-h-0-1}. Therefore $(\bu_h, p_h)$ is a solution to \eqref{weak-h-0}. Conversely, let $(\bu_h, p_h)$ be a solution to \eqref{weak-h-0}. Since $\bu_h \in \bV_{h,c}$, \eqref{weak-h-3} holds. Due to the inf-sup condition \eqref{inf-sup-b-gamma}, there exists a unique $\lambda_h \in \Lambda_h$ such that \eqref{weak-h-1} holds. Therefore $(\bu_h, p_h,\lambda_h)$ is a solution to \eqref{weak-h}.
\end{proof}
}

\begin{lemma}\label{interp-V-h0}
There exists an interpolant $\Pi_{h,c}: H^1(\Omega) \to \bV_{h,c}$ such that for all sufficiently smooth $\bv$,
  \begin{subequations}
    \begin{align}
& b_{h}(\Pi_{h,c}\bv, w_{h}) = b(\bv,w_{h}) \quad \forall \,w_{h} \in W_{h}, \label{Pih0-div} \\
& \|\Pi_{h,c}\bv\|_{V} \lesssim \|\bv\|_1, \label{Pih0-cont}\\
& \|\bv - \Pi_{h,c}\bv\|_{1,\Omega_S} + \|\bv - \Pi_{h,c}\bv\|_{\Omega_D}
\lesssim h
(\|\bv\|_1 + |\bv_S|_{2,\Omega_S}),
\label{Pih0-approx}\\
& \|\nabla\cdot(\bv - \Pi_{h,c}\bv)\|_{\Omega_D} \lesssim
h |\nabla\cdot \bv_D|_{1,\Omega_D}. \label{approx-div}
    \end{align}
    \end{subequations}
\end{lemma}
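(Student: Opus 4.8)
The plan is to obtain $\Pi_{h,c}$ by correcting the product of the two subdomain interpolants with a divergence-free Darcy field that removes the normal-trace jump across $\Gamma$, exploiting the choice $\Lambda_h = \bV_{D,h}\cdot\bn|_\Gamma$. First I set $\Pi_h\bv := (\Pi_{S,h}\bv_S,\Pi_{D,h}\bv_D)\in\bV_h$. By \eqref{PiSh-div} and \eqref{PiDh-div} we have $b_h(\Pi_h\bv,w_h)=b(\bv,w_h)$ for all $w_h\in W_h$; since $\nabla\cdot\Pi_{D,h}\bv_D$ is the $L^2(\Omega_D)$-projection of $\nabla\cdot\bv_D$ onto $W_{D,h}$, the bound \eqref{approx-div} already holds for $\Pi_h\bv$; and \eqref{PiSh-approx}, \eqref{PiSh-cont}, \eqref{PiDh-approx}, \eqref{PiDh-cont} give the continuity and the $L^2$/$H^1$ approximation estimates for $\Pi_h\bv$. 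Hence the only property still to be arranged is weak continuity across $\Gamma$, which I must obtain by adding a correction that is divergence-free in both subdomains so that none of the above is disturbed.

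Next I analyze the interface defect $b_\Gamma(\Pi_h\bv,\cdot)$. Because $\bv\in H^1(\Omega)$ is single-valued on $\Gamma$ and $\bn_S=-\bn_D$, $b_\Gamma(\bv,\xi_h)=0$, so $b_\Gamma(\Pi_h\bv,\xi_h) = -\langle(\bv_S-\Pi_{S,h}\bv_S)\cdot\bn_S,\xi_h\rangle_\Gamma - \langle(\bv_D-\Pi_{D,h}\bv_D)\cdot\bn_D,\xi_h\rangle_\Gamma$. The last term vanishes: the RT$_0$ interpolant preserves the normal flux on each Darcy face tested against piecewise constants, and $\Lambda_h=\bV_{D,h}\cdot\bn|_\Gamma$ consists exactly of such functions. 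Thus $b_\Gamma(\Pi_h\bv,\xi_h) = -\langle(\bv_S-\Pi_{S,h}\bv_S)\cdot\bn_S,\xi_h\rangle_\Gamma$. Moreover, using \eqref{PiSh-div} with $w_{S,h}\equiv1$, the divergence theorem, and $\bv_S=0$ on $\partial\Omega_S\setminus\Gamma$ (satisfied by the fields to which the interpolant is applied in the error analysis), one gets $\langle(\bv_S-\Pi_{S,h}\bv_S)\cdot\bn_S,1\rangle_\Gamma=0$; i.e.\ the $L^2(\Gamma)$-projection of this defect onto $\Lambda_h$ has zero mean.

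Given this compatibility I define the correction $\bm{z}_h:=(0,\bm{z}_{D,h})$, where $\bm{z}_{D,h}\in\bV_{D,h}$ is divergence-free, satisfies $\bm{z}_{D,h}\cdot\bn_D=0$ on $\partial\Omega_D\setminus\Gamma$, and $\langle\bm{z}_{D,h}\cdot\bn_D,\xi_h\rangle_\Gamma = \langle(\bv_S-\Pi_{S,h}\bv_S)\cdot\bn_S,\xi_h\rangle_\Gamma$ for all $\xi_h\in\Lambda_h$. Such $\bm{z}_{D,h}$ exists because the normal-trace map $\bV_{D,h}\to\Lambda_h$ is onto by the definition of $\Lambda_h$, the zero-mean property makes the target realizable by a divergence-free field, and one can take the minimal-norm solution of the resulting discrete Darcy problem (well-posed via \eqref{coer-a-Darcy} and \eqref{inf-sup-Darcy}). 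Since $\nabla\cdot\bm{z}_{D,h}=0$, $\|\bm{z}_{D,h}\|_{\div;\Omega_D}=\|\bm{z}_{D,h}\|_{\Omega_D}$, and the discrete divergence-free lifting estimate gives $\|\bm{z}_{D,h}\|_{\Omega_D}\lesssim\|(\bv_S-\Pi_{S,h}\bv_S)\cdot\bn_S\|_{H^{-1/2}(\Gamma)}\lesssim\|\bv_S-\Pi_{S,h}\bv_S\|_{1,\Omega_S}$ by the trace inequality; bounding the right-hand side by $\|\bv\|_1$ via \eqref{PiSh-cont} yields \eqref{Pih0-cont}, and by $h|\bv_S|_{2,\Omega_S}$ via \eqref{PiSh-approx} yields \eqref{Pih0-approx}. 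Finally $\Pi_{h,c}\bv:=\Pi_h\bv+\bm{z}_h$ lies in $\bV_{h,c}$ by construction of $\bm{z}_{D,h}$, while $\nabla\cdot\bm{z}_{D,h}=0$ preserves \eqref{Pih0-div} and \eqref{approx-div}.

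The main obstacle is the last ingredient: producing the divergence-free Darcy correction with a bound that does not degrade by a negative power of $h$. This hinges on (i) the zero-mean compatibility, which is exactly where the RT$_0$ flux preservation and the choice $\Lambda_h=\bV_{D,h}\cdot\bn|_\Gamma$ enter (and why this simpler presentation is adopted), and (ii) an $h$-uniform estimate $\|\bm{z}_{D,h}\|_{\div;\Omega_D}\lesssim\|\bm{z}_{D,h}\cdot\bn_D\|_{H^{-1/2}(\Gamma)}$ for discrete divergence-free RT$_0$ fields with prescribed normal trace; both are available in the mortar mixed finite element literature \cite{LSY,GVY,ACWY}, which I would invoke. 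For the more general $\Lambda_h$ satisfying \eqref{L<Vn}, one replaces exact flux matching by matching the projection $P_{D,h}\xi_h$ and argues as in \cite{GVY}.
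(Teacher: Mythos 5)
Your construction — subdomain interpolants $\Pi_{S,h}$ and $\Pi_{D,h}$ followed by a divergence-free Darcy-side correction with prescribed normal trace on $\Gamma$ to restore weak continuity — is exactly the construction the paper uses; the paper simply delegates the details (existence and stability of the correction, which it calls $\delta_{D,h}$) to Lemma~4.3 and Proposition~4.2 of \cite{LSY}, whereas you spell them out. The details you supply (vanishing of the Darcy-side trace defect against $\Lambda_h$, the zero-mean compatibility, and the $h$-uniform lifting bound) are consistent with that reference, so the proposal is correct and follows essentially the same route.
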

\begin{proof}
  The proof follows from the proofs of Lemma~4.3 and Proposition~4.2 in \cite{LSY},
  utilizing $\Pi_{S,h}$ from \eqref{interp-MAC} and $\Pi_{D,h}$ from \eqref{interp-Darcy}
  to build the interpolant in $\Omega_S$ and $\Omega_D$, respectively. In particular,
\begin{align}
  \Pi_{h,c}\bv|_{\Omega_S} = \Pi_{h,c}^S\bv_S = \Pi_{S,h}\bv_S, \qquad
  \Pi_{h,c}\bv|_{\Omega_D} = \Pi_{h,c}^D\bv_D = \Pi_{D,h}\bv_D + \delta_{D,h},
\end{align}
  where $\delta_{D,h} \in \bV_{D, h}$ is a suitably constructed correction that provides the weak continuity of the normal velocity. We omit further details.
\end{proof}

\begin{lemma}
The following inf-sup condition holds:
\begin{align}\label{inf-sup-0}
  \sup_{\bv_{h} \in \bV_{h,c}\setminus 0}\frac{b_{h}(\bv_{h},w_{h})}{\|\bv_{h}\|_V}
  \gtrsim \|w_{h}\|_W, \quad \forall w_{h} \in W_{h}.
  \end{align}
\end{lemma}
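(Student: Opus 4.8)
The plan is to establish \eqref{inf-sup-0} by a Fortin-operator argument: I will use the interpolant $\Pi_{h,c}$ from Lemma~\ref{interp-V-h0} to transfer the classical surjectivity of the divergence operator from the continuous level down to the discrete space $\bV_{h,c}$.

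Fix $w_h \in W_h$, and recall that $W_h \subset W = L_0^2(\Omega)$. It is well known that there exists $\bv \in H^1_0(\Omega)^n$ with $-\nabla\cdot\bv = w_h$ in $\Omega$ and $\|\bv\|_{1,\Omega} \lesssim \|w_h\|$. The reason to take a \emph{global} $H^1_0(\Omega)^n$ preimage is threefold: its subdomain restrictions then lie in $\bV_S$ and $\bV_D$; its normal trace is continuous across $\Gamma$, so it is weakly continuous; and it is regular enough to serve as an argument of $\Pi_{h,c}$. (If one wishes to be scrupulous about the regularity demanded by $\Pi_{h,c}$, a density argument together with \eqref{Pih0-cont} reduces the verification to smooth $\bv$.)

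Applying Lemma~\ref{interp-V-h0} then produces $\Pi_{h,c}\bv \in \bV_{h,c}$ with $\|\Pi_{h,c}\bv\|_V \lesssim \|\bv\|_1 \lesssim \|w_h\|_W$ by \eqref{Pih0-cont}, and, by the divergence-matching identity \eqref{Pih0-div} together with the choice of $\bv$,
\begin{align*}
b_h(\Pi_{h,c}\bv, w_h) = b(\bv, w_h) &= -(\nabla\cdot\bv_S, w_{S,h})_{\Omega_S} - (\nabla\cdot\bv_D, w_{D,h})_{\Omega_D} \\
&= \|w_{S,h}\|_{\Omega_S}^2 + \|w_{D,h}\|_{\Omega_D}^2 = \|w_h\|_W^2.
\end{align*}
For $w_h \ne 0$ this makes $\Pi_{h,c}\bv$ a nonzero element of $\bV_{h,c}$, so
$$
\sup_{\bv_h \in \bV_{h,c}\setminus 0}\frac{b_h(\bv_h, w_h)}{\|\bv_h\|_V} \ge \frac{b_h(\Pi_{h,c}\bv, w_h)}{\|\Pi_{h,c}\bv\|_V} \gtrsim \frac{\|w_h\|_W^2}{\|w_h\|_W} = \|w_h\|_W,
$$
which is \eqref{inf-sup-0}; the case $w_h = 0$ is trivial.

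I do not expect a genuine obstacle here: all the delicate work — the existence of a stable, divergence-preserving Fortin interpolant into $\bV_{h,c}$, including the weak-continuity correction $\delta_{D,h}$ on the Darcy side — has already been packaged into Lemma~\ref{interp-V-h0}. The only points that need attention are the two structural observations above: selecting the preimage $\bv$ globally in $H^1_0(\Omega)^n$ so that it is simultaneously admissible in $\bV_S$ and $\bV_D$ and normal-continuous across $\Gamma$, and using $W_h \subset L_0^2(\Omega)$ so that the continuous right inverse of the divergence is available in the first place.
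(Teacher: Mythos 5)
Your proposal is correct and follows essentially the same route as the paper: both arguments take the global $H^1_0(\Omega)^n$ right inverse of the divergence from Girault--Raviart and push it into $\bV_{h,c}$ via the Fortin interpolant $\Pi_{h,c}$, using \eqref{Pih0-div} and \eqref{Pih0-cont}. Your extra remarks on why the preimage should be taken globally in $H^1_0(\Omega)^n$ and on the regularity needed for $\Pi_{h,c}$ are sensible but do not change the substance of the argument.
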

\begin{proof}
Let $w_h \in W_h$ be given. Since $w_h \in L^2_0(\Omega)$, it is known
\cite{Girault-Raviart} that there exists $\bv \in H^1_0(\Omega)$ such that
\begin{align}\label{div-problem}
	\nabla\cdot\bv &= -w_h \ \mbox{ in } \Omega, & 
	\|\bv\|_{1} &\lesssim \|w_h\|.
\end{align}
We then have, using \eqref{div-problem}, \eqref{Pih0-div}, and \eqref{Pih0-cont},
\begin{align*}
\|w_h\|_W \lesssim \frac{b(\bv,w_h)}{\|\bv\|_1} = \frac{b_h(\Pi_{h,c}\bv,w_h)}{\|\bv\|_1}
\lesssim \frac{b_h(\Pi_{h,c}\bv,w_h)}{\|\Pi_{h,c}\bv\|_V}.
\end{align*}
  \end{proof}

\begin{lemma}\label{lem:weak-h-0}
  Problem \eqref{weak-h-0} has a unique solution $(\bu_h, p_h) \in \bV_{h,c} \times W_h$ that satisfies
  \begin{align}
  	\|\bu_{S,h}\|_{1,\Omega_S} 
  	+ \|\bu_{D,h}\|_{\div;\Omega_D}
  	+ \|p_h\|_{\Omega} 
  	\lesssim
  	\| \bm{f}_S \|_{-1, \Omega_S}
  	+ \| f_D \|_{\Omega_D}.
  \end{align}
\end{lemma}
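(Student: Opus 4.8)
The plan is to read \eqref{weak-h-0} as a generalized saddle-point problem on $\bV_{h,c}\times W_h$ of the form $a_h(\bu_h,\bv_h)+b_h(\bv_h,p_h)=F_h(\bv_h)$, $b_h(\bu_h,w_h)=G_h(w_h)$, with $F_h(\bv_h)=(\f_S,\cQ\bv_{S,h})_{\Omega_S}$ and $G_h(w_h)=-(f_D,w_{D,h})_{\Omega_D}$, and to invoke the Babu\v{s}ka--Brezzi theory (see e.g.\ \cite{Brezzi-Fortin}). Three of the four structural hypotheses are already at our disposal: continuity of $a_h$ on $\bV_{h,c}\times\bV_{h,c}$ follows by adding \eqref{cont-a-MAC} and \eqref{cont-a-Darcy}; continuity of $b_h$ on $\bV_{h,c}\times W_h$ follows by adding \eqref{cont-b-MAC} and \eqref{cont-b-Darcy}; and the inf-sup condition for $b_h$ on $\bV_{h,c}\times W_h$ is precisely \eqref{inf-sup-0}.

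The only hypothesis needing a short argument is coercivity of $a_h$ on the kernel $\bZ_{h,c}:=\{\bv_h\in\bV_{h,c}:b_h(\bv_h,w_h)=0\ \forall\,w_h\in W_h\}$. For $\bv_h\in\bZ_{h,c}$, testing with $w_h$ supported only in $\Omega_S$ gives $b_{S,h}(\bv_{S,h},w_{S,h})=0$ for all $w_{S,h}\in W_{S,h}$, i.e.\ $\bv_{S,h}\in\bZ_{S,h}$; testing with $w_h$ supported only in $\Omega_D$ gives $(\nabla\cdot\bv_{D,h},w_{D,h})_{\Omega_D}=0$ for all $w_{D,h}\in W_{D,h}$, and since $\nabla\cdot\bv_{D,h}\in W_{D,h}$ by \eqref{eq: div property}, this forces $\nabla\cdot\bv_{D,h}=0$ and hence $\|\bv_{D,h}\|_{\div;\Omega_D}=\|\bv_{D,h}\|_{\Omega_D}$. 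Combining the kernel coercivity \eqref{coer-a-MAC} of $a_{S,h}$ with \eqref{coer-a-Darcy} for $a_D$ then yields $a_h(\bv_h,\bv_h)=a_{S,h}(\bv_{S,h},\bv_{S,h})+a_D(\bv_{D,h},\bv_{D,h})\gtrsim\|\bv_{S,h}\|_{1,\Omega_S}^2+\|\bv_{D,h}\|_{\Omega_D}^2=\|\bv_h\|_V^2$ for all $\bv_h\in\bZ_{h,c}$.

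With these four properties in hand, the theory provides a unique $(\bu_h,p_h)\in\bV_{h,c}\times W_h$ together with the bound $\|\bu_h\|_V+\|p_h\|_W\lesssim\|F_h\|_{\bV_{h,c}'}+\|G_h\|_{W_h'}$, so it only remains to estimate the two data functionals. The bound $|G_h(w_h)|\le\|f_D\|_{\Omega_D}\|w_h\|_W$ is immediate from Cauchy--Schwarz, and for $F_h$ one uses the $L^2$-stability and first-order consistency of the quadrature interpolant $\cQ$ on $\bV_{S,h}$ to get $|F_h(\bv_h)|\lesssim\|\f_S\|_{-1,\Omega_S}\|\bv_h\|_V$, exactly as in \cite{Han-Wu,LSY}. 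I expect this last point, i.e.\ the treatment of the non-standard right-hand side involving $\cQ$, to be the only step requiring genuine care; everything else is a direct assembly of the already-established continuity, coercivity, and inf-sup estimates.
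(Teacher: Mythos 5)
Your proposal is correct and follows essentially the same route as the paper: the paper likewise notes that the kernel of $b_h$ decomposes as $\bZ_{S,h}\times\bZ_{D,h}$, obtains coercivity of $a_h$ on it from \eqref{coer-a-MAC}, \eqref{coer-a-Darcy}, and \eqref{eq: div property}, and then invokes the Brezzi saddle-point theory together with the inf-sup condition \eqref{inf-sup-0}. You simply spell out the continuity checks and the data-functional bounds that the paper leaves implicit.
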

\begin{proof}
  \rev{Let $\bZ_{D,h} = \{\bv_{D,h} \in \bV_{D,h}: b_D(\bv_{D,h},w_{D,h}) = 0 \ \ \forall w_{D,h} \in W_{D,h}\}$ and
$$
\bZ_h = \bZ_{S,h} \times  \bZ_{D,h} = \{\bv_h \in \bV_h: b_h(\bv_h,w_h) = 0 \ \ \forall w_h \in W_h\}.
$$
From \eqref{coer-a-MAC} and \eqref{coer-a-Darcy}, using \eqref{eq: div property}, we obtain
  \begin{align}\label{coer-a}
    a_h(\bv_h,\bv_h) &\gtrsim \|\bv_{S,h}\|_{1,\Omega_S}^2 + \|\bv_{D,h}\|_{\div;\Omega_D}^2
    \quad \forall \, \bv_h \in \bZ_h.
  \end{align}
}  
The assertion of the lemma follows from \eqref{coer-a} 
and the inf-sup condition \eqref{inf-sup-0}, using the general theory
of saddle point problems \cite{Brezzi-Fortin}.
\end{proof}

\rev{Lemma~\ref{lem:weak-h-0} and Lemma~\ref{lem:equiv} imply well posedness of the mortar MFE method \eqref{weak-h}.}
\rev{  
\begin{lemma}
  Problem \eqref{weak-h} has a unique solution $(\bu_h, p_h, \lambda_h) \in \bV_h \times W_h \times \Lambda_h$ that satisfies
  \begin{align}
  	\|\bu_{S,h}\|_{1,\Omega_S} 
  	+ \|\bu_{D,h}\|_{\div;\Omega_D}
  	+ \|p_h\|_{\Omega} + \|\lambda_h\|_{\Gamma}
  	\lesssim
  	\| \bm{f}_S \|_{\Omega_S}
  	+ \| f_D \|_{\Omega_D}.
  \end{align}
\end{lemma}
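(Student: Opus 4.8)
The plan is to derive the statement directly from Lemma~\ref{lem:equiv} and Lemma~\ref{lem:weak-h-0}, adding only an estimate for the mortar variable via the interface inf-sup condition \eqref{inf-sup-b-gamma}. First I would invoke Lemma~\ref{lem:weak-h-0} to obtain a unique solution $(\bu_h,p_h) \in \bV_{h,c}\times W_h$ of the reduced problem \eqref{weak-h-0}, together with its stability bound (whose right-hand side involves $\|\f_S\|_{-1,\Omega_S}$, which I bound by $\|\f_S\|_{\Omega_S}$). Then Lemma~\ref{lem:equiv} provides a unique $\lambda_h \in \Lambda_h$ making $(\bu_h,p_h,\lambda_h)$ a solution of \eqref{weak-h}, and, since any solution of \eqref{weak-h} restricts to a solution of \eqref{weak-h-0}, this solution is unique. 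This settles existence and uniqueness as well as the bound on $\|\bu_{S,h}\|_{1,\Omega_S}+\|\bu_{D,h}\|_{\div;\Omega_D}+\|p_h\|_\Omega$.

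Next I would bound $\|\lambda_h\|_\Gamma$. By \eqref{inf-sup-b-gamma} it is enough to control $b_{\Gamma}(\bv_h,\lambda_h)$ for $\bv_h \in \bV_h$ by $\|\bv_h\|_V$ times the data. Since \eqref{weak-h-1} holds for every $\bv_h \in \bV_h$, I would write $b_{\Gamma}(\bv_h,\lambda_h) = (\f_S,\cQ\bv_{S,h})_{\Omega_S} - a_h(\bu_h,\bv_h) - b_h(\bv_h,p_h)$, bound the last two terms by $(\|\bu_h\|_V+\|p_h\|_W)\|\bv_h\|_V$ using the continuity estimates \eqref{cont-a-MAC}, \eqref{cont-a-Darcy}, \eqref{cont-b-MAC}, \eqref{cont-b-Darcy}, and bound the first term by $\|\f_S\|_{\Omega_S}\|\bv_h\|_V$ via Cauchy--Schwarz together with the $L^2$-stability $\|\cQ\bv_{S,h}\|_{\Omega_S} \lesssim \|\bv_{S,h}\|_{\Omega_S}$. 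Dividing by $\|\bv_h\|_V$ and taking the supremum gives $\|\lambda_h\|_\Gamma \lesssim \|\f_S\|_{\Omega_S} + \|\bu_h\|_V + \|p_h\|_W$; combining with Lemma~\ref{lem:weak-h-0} finishes the proof.

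I expect this to be mostly bookkeeping, since the substantive ingredients --- coercivity of $a_h$ on $\bZ_h$, the inf-sup condition \eqref{inf-sup-0}, the interface inf-sup \eqref{inf-sup-b-gamma}, and the reduction to \eqref{weak-h-0} --- are already available from the preceding lemmas. The only point that needs a short separate argument is the $L^2$-stability of the interpolant $\cQ$, which follows from a local scaling estimate (an inverse inequality on the bilinear spaces $S_h^i$ and the fact that each control volume $G_i$ has measure comparable to $h^n$); I do not anticipate a genuine obstacle here.
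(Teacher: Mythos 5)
Your proposal is correct and follows exactly the route the paper intends: the paper gives no explicit proof, stating only that Lemma~\ref{lem:weak-h-0} and Lemma~\ref{lem:equiv} imply the result, and your argument supplies the standard missing details (in particular, bounding $\|\lambda_h\|_\Gamma$ by isolating $b_\Gamma(\bv_h,\lambda_h)$ in \eqref{weak-h-1} and invoking the interface inf-sup condition \eqref{inf-sup-b-gamma} together with the continuity of $a_h$ and $b_h$ and the $L^2$-stability of $\cQ$). No gaps.
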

}
\section{Error estimates}
\label{sec:error}

In this section we establish convergence rates for the mortar finite element solution to the coupled Stokes-Darcy problem.

\begin{theorem}\label{thm-conv}
  Assuming sufficiently smooth solution to \eqref{weak}, the solution $(\bu_h,p_h)$ of the
  mortar finite element method \eqref{weak-h-0} satisfies
  \begin{subequations}
  \begin{align}
    & \|\bu - \bu_h\|_V \lesssim h(\|\bu\|_1 + |\bu_S|_{2,\Omega_S}
    + |\nabla\cdot \bu_D|_{1,\Omega_D} + |p_S|_{1,\Omega_S} +
    |\lambda|_{1,\Gamma}
    \rev{+ \| \bm{f}_S \|_{\Omega_S}}),   \label{err-est}\\
    & \|p - p_h\|_W \lesssim h(\|\bu\|_1 + |\bu_S|_{2,\Omega_S}
    + |p_S|_{1,\Omega_S} + |p_D|_{1,\Omega_D} +
    |\lambda|_{1,\Gamma}
    \rev{+ \| \bm{f}_S \|_{\Omega_S}}).   \label{err-est-p}
  \end{align}
  \end{subequations}
\end{theorem}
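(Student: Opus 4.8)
The plan is to follow the standard energy argument for mortar mixed finite element methods (cf.\ \cite{LSY}), taking extra care with the quadrature errors introduced by the MAC reformulation. Let $\Pi_{h,c}$ be the interpolant of Lemma~\ref{interp-V-h0}, let $\cP_h$ denote the $L^2(\Omega)$-orthogonal projection onto $W_h$ (its $S$- and $D$-components being the local $L^2$-projections onto $W_{S,h}$ and $W_{D,h}$, which already have mean zero jointly since local projections preserve integrals), and let $P_{D,h}$ be the $L^2(\Gamma)$-projection onto $\Lambda_h = \bV_{D,h}\cdot\bn|_\Gamma$. Set $\bm{\theta}_h := \Pi_{h,c}\bu - \bu_h$ and $\phi_h := \cP_h p - p_h$; then $\bm{\theta}_h \in \bV_{h,c}$ since $\bu_h$ and $\Pi_{h,c}\bu$ are, and combining \eqref{Pih0-div} with $\nabla\cdot\bu_S = 0$, $\nabla\cdot\bu_D = f_D$ and \eqref{weak-h-0-2} one checks $b_h(\bm{\theta}_h, w_h) = 0$ for all $w_h \in W_h$, i.e.\ $\bm{\theta}_h \in \bZ_h$.

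Next I would form the error equation: subtracting \eqref{weak-h-0-1} from \eqref{weak-1} tested with $\bv_h \in \bV_{h,c} \subset \bV$ and inserting $\Pi_{h,c}\bu$, $\cP_h p$ yields
\[
a_h(\bm{\theta}_h,\bv_h) + b_h(\bv_h,\phi_h) = \mathcal{E}(\bv_h), \qquad \forall\, \bv_h \in \bV_{h,c},
\]
where $\mathcal{E}(\bv_h)$ gathers four consistency contributions: (i) the right-hand-side quadrature error $(\f_S,\bv_{S,h}-\cQ\bv_{S,h})_{\Omega_S}$; (ii) the Stokes form discrepancy $a_{S,h}(\Pi_{S,h}\bu_S,\bv_{S,h}) - a_S(\bu_S,\bv_{S,h})$ plus the Darcy interpolation remainder $a_D(\Pi_{h,c}^D\bu_D - \bu_D,\bv_{D,h})$; (iii) the divergence-pairing discrepancy $b_{S,h}(\bv_{S,h},(\cP_h p)_S) - b_S(\bv_{S,h},p_S)$ (its Darcy counterpart vanishes because $\nabla\cdot\bV_{D,h}=W_{D,h}$ by \eqref{eq: div property} and $(\cP_h p)_D$ is the $L^2$-projection of $p_D$); and (iv) the mortar term $-b_\Gamma(\bv_h,\lambda)$, which for $\bv_h\in\bV_{h,c}$ reduces to $-\langle\bv_{S,h}\cdot\bn_S,\lambda-P_{D,h}\lambda\rangle_\Gamma$ after using $b_\Gamma(\bv_h,P_{D,h}\lambda)=0$ and the orthogonality of $P_{D,h}$ against $\bv_{D,h}\cdot\bn_D|_\Gamma\in\Lambda_h$.

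For the velocity bound I would take $\bv_h=\bm{\theta}_h\in\bZ_h$, so $b_h(\bm{\theta}_h,\phi_h)=0$, and apply the coercivity \eqref{coer-a} to get $\|\bm{\theta}_h\|_V^2 \lesssim a_h(\bm{\theta}_h,\bm{\theta}_h)=\mathcal{E}(\bm{\theta}_h)$, and then bound each term of $\mathcal{E}(\bm{\theta}_h)$ by $h(\cdots)\|\bm{\theta}_h\|_V$: term (i) via $\|\bv_{S,h}-\cQ\bv_{S,h}\|_{\Omega_S}\lesssim h\|\bv_{S,h}\|_{1,\Omega_S}$ and $\|\f_S\|_{\Omega_S}$; term (ii) by splitting the Stokes part as $[a_{S,h}(\bu_S,\cdot)-a_S(\bu_S,\cdot)] + a_{S,h}(\Pi_{S,h}\bu_S-\bu_S,\cdot)$, bounding the first bracket by a quadrature estimate that needs $\bu_S\in H^2(\Omega_S)$ (also covering the BJS interface quadrature via a trace inequality) and the second by \eqref{PiSh-approx} and \eqref{cont-a-MAC}, and bounding the Darcy remainder via \eqref{cont-a-Darcy} and \eqref{Pih0-approx}; term (iii) by writing $b_S(\bm{\theta}_{S,h},p_S)=b_S(\bm{\theta}_{S,h},p_S-(\cP_h p)_S)+b_S(\bm{\theta}_{S,h},(\cP_h p)_S)$, so the projection error is $O(h|p_S|_{1,\Omega_S})$ while the residual $b_S(\bm{\theta}_{S,h},(\cP_h p)_S)-b_{S,h}(\bm{\theta}_{S,h},(\cP_h p)_S)$ is handled by a quadrature/summation-by-parts argument; term (iv) by a trace inequality and the first-order $L^2(\Gamma)$-approximation of $P_{D,h}$, giving $O(h|\lambda|_{1,\Gamma})$. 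Collecting these gives $\|\bm{\theta}_h\|_V \lesssim h(\cdots)$, and \eqref{err-est} follows by the triangle inequality with \eqref{Pih0-approx} and \eqref{approx-div}. For the pressure, the inf-sup condition \eqref{inf-sup-0} gives $\|\phi_h\|_W \lesssim \sup_{\bv_h\in\bV_{h,c}\setminus 0} b_h(\bv_h,\phi_h)/\|\bv_h\|_V$; from the error equation $b_h(\bv_h,\phi_h)=\mathcal{E}(\bv_h)-a_h(\bm{\theta}_h,\bv_h)$, and bounding $\mathcal{E}(\bv_h)$ as above and $a_h(\bm{\theta}_h,\bv_h)\lesssim\|\bm{\theta}_h\|_V\|\bv_h\|_V$ via \eqref{cont-a-MAC}, \eqref{cont-a-Darcy} and the velocity bound just obtained, one gets $\|\phi_h\|_W\lesssim h(\cdots)$, whence \eqref{err-est-p} follows by the triangle inequality and $\|p-\cP_h p\|_W\lesssim h(|p_S|_{1,\Omega_S}+|p_D|_{1,\Omega_D})$.

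I expect the main obstacle to be the quadrature-consistency bounds for the Stokes forms, namely the bracket $a_{S,h}(\bu_S,\cdot)-a_S(\bu_S,\cdot)$ in term (ii) and the residual in term (iii). Because of the vertex rule $I_{E_i}$ and especially the nonstandard modification $\widetilde{\bm{\varepsilon}}_{12},\widetilde{\bm{\varepsilon}}_{21}$ built from the averaging operators $Q_1,Q_2$ in \eqref{a-sh}, a textbook Bramble--Hilbert estimate does not apply directly; one must compare $a_{S,h}$ and $b_{S,h}$ with the exact forms cell by cell on the two staggered grids, exploiting the piecewise-bilinear structure of $\bV_{S,h}$ on the overlapping primal cells and the superconvergence of the vertex rule for the relevant polynomial degrees, along the lines of \cite{Han-Wu,WY-MPFA}. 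Everything else — the interpolation estimates, the Darcy terms, the mortar term, and the final assembly — is routine given the results collected in Section~\ref{sec:well-posed}.
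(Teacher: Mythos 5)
Your proposal follows essentially the same route as the paper: the same interpolant $\Pi_{h,c}$ and $L^2$-projections onto $W_h$ and $\Lambda_h$, the same error equation with the same four consistency contributions, the test function $\bv_h = \Pi_{h,c}\bu - \bu_h \in \bZ_h$ with coercivity \eqref{coer-a} for the velocity and the inf-sup condition \eqref{inf-sup-0} for the pressure, and the same treatment of the nonconforming mortar term via $\lambda - P_{\Lambda_h}\lambda$ and a trace inequality. The only organizational difference is in the Stokes consistency term: rather than splitting off $a_{S,h}(\bu_S,\cdot) - a_S(\bu_S,\cdot)$ (which is delicate, since $I_{E_i}$ and $Q_1,Q_2$ act on vertex and edge values of $\nabla\bu_S$ that are not defined for merely $H^2$ velocities), the paper keeps $R_{u,S} = a_{S,h}(\Pi_{S,h}\bu_S,\cdot) - a_S(\bu_S,\cdot)$ intact, uses $\Pi_{S,h}\bu_S\in\bZ_{S,h}$ together with $\nabla\cdot\bu_S=0$ to reduce both forms to plain gradient forms first, and then invokes the quadrature estimate of \cite{Han-Wu} --- precisely the cell-by-cell comparison you correctly identify as the main remaining obstacle.
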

\begin{proof}
  Let $Q_h = (Q_{S,h},Q_{D,h})$, where $Q_{i,h}$ is the $L^2$-orthogonal projection onto $W_{i,h}$, $i = S,D$. The two operators satisfy, for all $w_S \in H^1(\Omega_S)$ and $w_D \in H^1(\Omega_D)$, 
  \begin{align}\label{approx-Q}
    \|w_S - Q_{S,h} w_S\|_{\Omega_S} &\lesssim h |w_S|_{1,\Omega_S}, &
    \|w_D - Q_{D,h} w_D\|_{\Omega_D} &\lesssim h |w_D|_{1,\Omega_D}.
  \end{align}
We start by noting that $\bV_h \times W_h \subset \bV \times W$. Thus, subtracting \eqref{weak-h-0-1}--\eqref{weak-h-0-2} from \eqref{weak-1}--\eqref{weak-2} leads us to the error equations:
\begin{subequations} \label{eq: proof1}
\begin{align}
		a(\bu, \bv_h) 
		- a_h(\bu_h, \bv_h) 
		+ b(\bv_h, p)
		- b_h(\bv_h, p_h) 
		+ b_{\Gamma}(\bv_h,\lambda)
		&=
    \rev{(\bm{f}_S, (I - \cQ) \bv_{S,h})_{\Omega_S},} 
    & \forall \bv_h &\in \bV_{h,c}, \label{eq: proof1a} \\
    b(\bu,w_h) - b_h(\bu_h,w_h) &= 0, & \forall w_h &\in W_h. \label{eq: proof1b}
\end{align}
\end{subequations}

Using $\nabla\cdot \bV_{D,h} = W_{D,h}$, cf. \eqref{eq: div property}, we proceed by considering the following differences:
\begin{subequations} \label{eq: discrepancy terms}
\begin{align}
		a_h(\Pi_{h,c} \bu, \bv_h) - 
		a(\bu, \bv_h) &= 
 		a_D(\Pi_{h,c}^D\bu_D - \bu_D,\bv_{D,h})
		+ 
		\left(
 		a_{S,h}(\Pi_{S,h}\bu_S,\bv_{S,h}) - a_S(\bu_S,\bv_{S,h})
		\right) \nonumber \\
 		&=: 
 		R_{u, D}(\bu_D,\bv_{D,h})
 		+ 
 		R_{u, S}(\bu_S,\bv_{S,h}) \\
		b_h(\bv_h, Q_h p)
		- b(\bv_h, p)
		&= b_{S,h}(\bv_{S,h},Q_{S,h}p_S) - b_S(\bv_{S,h},p_S) \nonumber\\
		&=:
		R_{p,S}(p_S,\bv_{S,h})
\end{align}
\end{subequations}
Adding \eqref{eq: discrepancy terms} to \eqref{eq: proof1a} and using property \eqref{Pih0-div} of $\Pi_{h,c}$ in \eqref{eq: proof1b}, we rewrite \eqref{eq: proof1} as
\begin{subequations} \label{eq: err-eqs}
\begin{align}
  a_h(\Pi_{h,c}\bu -\bu_h, \bv_h) + &b_h(\bv_h, Q_h p - p_h)
    \nonumber \\
    &=
    - b_{\Gamma}(\bv_h,\lambda)
    \rev{+ (\bm{f}_S, (I - \cQ) \bv_{S,h})_{\Omega_S}} \nonumber\\
    & \qquad + R_{u, D}(\bu_D,\bv_{D,h}) +
    R_{u, S}(\bu_S,\bv_{S,h}) + R_{p,S}(p_S,\bv_{S,h}), & 
    \forall \bv_h &\in \bV_{h,c},
    \label{err-eq-1}\\
    b_h(\Pi_{h,c}\bu - \bu_h,w_h) &= 0, & 
    \forall w_h &\in W_h, \label{err-eq--2}
\end{align}
\end{subequations}

We now take $\bv_h = \Pi_{h,c}\bu -\bu_h$ and $w_h = p_h - Q_h p$. \rev{Note that \eqref{err-eq--2} implies that $\Pi_{h,c} \bu -\bu_h \in \bZ_h$.} By summing the equations \eqref{eq: err-eqs} and using the coercivity \eqref{coer-a} we derive:
\begin{align}\label{err-bound-1}
  \|\Pi_{h,c}\bu -\bu_h\|_V^2 & \lesssim
  |b_{\Gamma}(\bv_h,\lambda)|
  \rev{+ |(\bm{f}_S, (I - \cQ) \bv_{S,h})_{\Omega_S}|} \nonumber\\
  & \qquad + |R_{u, D}(\bu_D,\bv_{D, h})|
  + |R_{u, S}(\bu_S,\bv_{S, h})|
  + |R_{p,S}(p_S,\bv_{S, h})|.
\end{align}

We proceed by bounding the \rev{five} terms on the right-hand side. 
The first term is the non-conforming
error on the interface. Using the definition \eqref{defn-Vh0} of $\bV_{h,c}$ and the fact that $\Lambda_h = \bV_{D,h}\cdot\bn|_{\Gamma}$, we have
\begin{align}\label{b-Gamma-bound}
  |b_{\Gamma}(\Pi_{h,c}\bu -\bu_h,\lambda)| & =
  |b_{\Gamma}(\Pi_{h,c}\bu -\bu_h,\lambda - P_{\Lambda_h}\lambda)|
  = |\lang \Pi_{S,h}\bu_S - \bu_{S,h},\lambda - P_{\Lambda_h}\lambda\rang_{\Gamma}| \nonumber \\
  & \lesssim h \|\Pi_{S,h}\bu_S - \bu_{S,h}\|_{1,\Omega_S}|\lambda|_{1,\Gamma}
\le h \|\Pi_{h,c}\bu -\bu_h\|_V |\lambda|_{1,\Gamma},
\end{align}
where $P_{\Lambda_h}$ is the $L^2$-orthogonal projection onto $\Lambda_h$, and we
used its approximation property
\begin{equation}\label{approx-PLambda}
\forall \xi \in H^1(\Gamma), \quad \|\xi - P_{\Lambda_h}\xi\|_\Gamma \lesssim h |\xi|_{1,\Gamma},
\end{equation}
and the trace inequality
$$
\forall \bv_S \in (H^1(\Omega_S))^n, \quad \|\bv_S\|_\Gamma \lesssim \|\bv_S\|_{1,\Omega_S}.
$$

\rev{
We bound the second term by using the Cauchy-Schwarz inequality and the approximation properties of the interpolant $\cQ$:
\begin{align}\label{RuD}
  |(\bm{f}_S, (I - \cQ) \bv_{S,h})_{\Omega_S}|
  &\lesssim \| \bm{f}_S \|_{\Omega_S} \| (I - \cQ) \bv_{S,h} \|_{\Omega_S}
  \lesssim h \| \bm{f}_S \|_{\Omega_S} \| \bv_{S,h} \|_{1, \Omega_S}
\end{align}
}

The \rev{third} term is bounded by the continuity of $a_D$ \eqref{cont-a-Darcy} and the approximation property \eqref{Pih0-approx}:
\begin{align}
  R_{u, D}(\bu_D,\bv_{D,h}) \lesssim
  \|\Pi_{h,c}^D \bu_D - \bu_D\|_{\Omega_D}\|\bv_{D,h}\|_{\Omega_D}
  \lesssim h \, \|\bu\|_1 \|\bv_{D,h}\|_{\Omega_D}.
\end{align}
\rev{To bound the final two terms in \eqref{err-bound-1}, we first note that, due to \eqref{PiSh-div}, $\Pi_{S,h}\bu_S \in \bZ_{S,h}$, implying that $a_{S,h}(\Pi_{S,h}\bu_S,\bv_{S,h})$ can be expressed as in \eqref{a-sh-new}. In addition, since $\nabla \cdot\bu_S = 0$, the elliptic term in $a_S(\bu_S,\bv_S)$ can be expressed as $(\mu\nabla\bu_S,\nabla\bv_S)_{\Omega_S}$.} Therefore, following the argument in \cite{Han-Wu}, we have
\begin{align}\label{R1R2}
  |R_{u, S}(\bu_S,\bv_{S,h})| &\lesssim h |\bu_S|_{2,\Omega_S}\|\bv_{S,h}\|_{1,\Omega_S}, &
  |R_{p,S}(p_S,\bv_{S,h})| &\lesssim h |p_s|_{1,\Omega_S}\|\bv_{S,h}\|_{1,\Omega_S}.
\end{align}
Combining \eqref{err-bound-1}--\eqref{b-Gamma-bound} and \rev{\eqref{RuD}}--\eqref{R1R2}, we obtain
\begin{align} \label{eq: final error eq u}
  \|\Pi_{h,c}\bu -\bu_h\|_V \lesssim h(\|\bu\|_1 +  |\bu_S|_{2,\Omega_S} + |p_S|_{1,\Omega_S} + |\lambda|_{1,\Gamma} \rev{+ \| \bm{f}_S \|_{\Omega_S}}).
\end{align}
The bound on $\|\bu - \bu_h\|_V$ in \eqref{err-est} now follows from
\eqref{Pih0-approx} and \eqref{approx-div}.  To bound $\|p - p_h\|_W$, we use the inf-sup condition \eqref{inf-sup-0} and the error equation \eqref{err-eq-1}:
\begin{align*}
  \|Q_h p - p_h\|_W  & \lesssim
  \sup_{\bv_{h} \in \bV_{h,c}\setminus 0}\|\bv_{h}\|_V^{-1} \, b_{h}(\bv_{h},Q_h p - p_h) \\
  & = \sup_{\bv_{h} \in \bV_{h,c}\setminus 0}\|\bv_{h}\|_V^{-1}
  (- a_h(\Pi_{h,c}\bu -\bu_h, \bv_h)
  - b_{\Gamma}(\bv_h,\lambda - P_{\Lambda_h}\lambda)
  \rev{+ (\bm{f}_S, (I - \cQ) \bv_{S,h})_{\Omega_S}} \\
  & \qquad\qquad\qquad\qquad\quad + R_{u, D}(\bu_D,\bv_{D,h}) + R_{u, S}(\bu_S,\bv_{S,h}) + R_{p,S}(p_S,\bv_{S,h}))\\
  & \lesssim h \, (\|\bu\|_1 + |\bu_S|_{2,\Omega_S} + |p_S|_{1,\Omega_S} + |\lambda|_{1,\Gamma} \rev{+ \| \bm{f}_S \|_{\Omega_S}}),
\end{align*}
where we used bounds \eqref{approx-PLambda}--\eqref{eq: final error eq u} in the last inequality. The bound on $\|p - p_h\|_W$ in \eqref{err-est-p} now follows from the approximation property \eqref{approx-Q} and the triangle inequality.
\end{proof}

\begin{theorem}\label{thm-conv-mortar}
If the solution to \eqref{weak} is sufficiently smooth, then the mortar variable $\lambda_h \in \Lambda_h$ satisfies
	\begin{align}
		\| \lambda - \lambda_h \|_\Gamma \lesssim 
		h(\|\bu\|_1 + |\bu_S|_{2,\Omega_S} + |p_S|_{1,\Omega_S} +
    |\lambda|_{1,\Gamma}
    \rev{+ \| \bm{f}_S \|_{\Omega_S}}).
	\end{align}
\end{theorem}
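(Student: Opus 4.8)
The plan is to control $\lambda_h$ through the interface inf-sup condition \eqref{inf-sup-b-gamma}, feeding in the error equation obtained by subtracting the discrete momentum equation \eqref{weak-h-1} from the continuous one \eqref{weak-1}, and then recycling the velocity and pressure estimates already assembled in the proof of Theorem~\ref{thm-conv}. First I would write, using the triangle inequality and the approximation property \eqref{approx-PLambda} of $P_{\Lambda_h}$,
\begin{align*}
  \|\lambda - \lambda_h\|_\Gamma \le \|\lambda - P_{\Lambda_h}\lambda\|_\Gamma + \|P_{\Lambda_h}\lambda - \lambda_h\|_\Gamma \lesssim h|\lambda|_{1,\Gamma} + \|P_{\Lambda_h}\lambda - \lambda_h\|_\Gamma,
\end{align*}
so it remains to bound the last term. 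Since $P_{\Lambda_h}\lambda - \lambda_h \in \Lambda_h$, the inf-sup condition \eqref{inf-sup-b-gamma} gives
\begin{align*}
  \|P_{\Lambda_h}\lambda - \lambda_h\|_\Gamma \lesssim \sup_{\bv_h \in \bV_h \setminus 0}\frac{b_{\Gamma}(\bv_h, P_{\Lambda_h}\lambda - \lambda_h)}{\|\bv_h\|_V},
\end{align*}
and I am reduced to estimating $b_{\Gamma}(\bv_h, P_{\Lambda_h}\lambda - \lambda_h)$ for an arbitrary $\bv_h \in \bV_h$ (note that, unlike in the proof of Theorem~\ref{thm-conv}, here $\bv_h$ ranges over all of $\bV_h$ and need not be weakly continuous).

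Next I would split $b_{\Gamma}(\bv_h, P_{\Lambda_h}\lambda - \lambda_h) = b_{\Gamma}(\bv_h, P_{\Lambda_h}\lambda - \lambda) + b_{\Gamma}(\bv_h, \lambda - \lambda_h)$. For the first piece, since $\Lambda_h = \bV_{D,h}\cdot\bn|_\Gamma$ we have $\bv_{D,h}\cdot\bn_D|_\Gamma \in \Lambda_h$, so $\langle \bv_{D,h}\cdot\bn_D, P_{\Lambda_h}\lambda - \lambda\rangle_\Gamma = 0$ by definition of the $L^2$-projection, leaving only $\langle \bv_{S,h}\cdot\bn_S, P_{\Lambda_h}\lambda - \lambda\rangle_\Gamma \lesssim h|\lambda|_{1,\Gamma}\|\bv_{S,h}\|_{1,\Omega_S}$ by the trace inequality and \eqref{approx-PLambda}. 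For the second piece, since $\bV_h \subset \bV$ I subtract \eqref{weak-h-1} from \eqref{weak-1} tested against $\bv_h$ (here $\lambda_h$ is the mortar variable of \eqref{weak-h}, which exists by Lemma~\ref{lem:equiv}) to obtain
\begin{align*}
  b_{\Gamma}(\bv_h, \lambda - \lambda_h) = (\bm{f}_S, (I-\cQ)\bv_{S,h})_{\Omega_S} - \big(a(\bu,\bv_h) - a_h(\bu_h,\bv_h)\big) - \big(b(\bv_h,p) - b_h(\bv_h,p_h)\big).
\end{align*}
Inserting $\Pi_{h,c}\bu$ and $Q_h p$ exactly as in the proof of Theorem~\ref{thm-conv} and using the definitions of $R_{u,D},R_{u,S},R_{p,S}$ in \eqref{eq: discrepancy terms}, this becomes
\begin{align*}
  b_{\Gamma}(\bv_h, \lambda - \lambda_h) &= (\bm{f}_S, (I-\cQ)\bv_{S,h})_{\Omega_S} + R_{u,D}(\bu_D,\bv_{D,h}) + R_{u,S}(\bu_S,\bv_{S,h}) + R_{p,S}(p_S,\bv_{S,h}) \\
  & \quad - a_h(\Pi_{h,c}\bu - \bu_h, \bv_h) - b_h(\bv_h, Q_h p - p_h).
\end{align*}

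Now every term is of size $h(\cdots)\|\bv_h\|_V$: the first four by \eqref{RuD}, the $R_{u,D}$ bound from the proof of Theorem~\ref{thm-conv}, and \eqref{R1R2}; the fifth by continuity of $a_h$ (from \eqref{cont-a-MAC} and \eqref{cont-a-Darcy}) together with the velocity estimate \eqref{eq: final error eq u}; and the sixth by continuity of $b_h$ (from \eqref{cont-b-MAC} and \eqref{cont-b-Darcy}) together with the bound $\|Q_h p - p_h\|_W \lesssim h(\cdots)$ derived inside the proof of Theorem~\ref{thm-conv}. Dividing by $\|\bv_h\|_V$, taking the supremum, and combining with the triangle inequality above yields the claim. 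I expect the only real difficulty to be bookkeeping rather than anything deep: one must use \eqref{inf-sup-b-gamma} over the full space $\bV_h$ rather than $\bV_{h,c}$, and one must invoke $\Lambda_h = \bV_{D,h}\cdot\bn|_\Gamma$ so that the projection error $\lambda - P_{\Lambda_h}\lambda$ is paired only with the Stokes normal trace $\bv_{S,h}\cdot\bn_S$ (controlled by $\|\bv_{S,h}\|_{1,\Omega_S}$ via the trace inequality) and never with $\bv_{D,h}\cdot\bn_D$, for which no bound in terms of $\|\bv_h\|_V$ is available. Everything else is a direct reuse of estimates already established in Theorem~\ref{thm-conv}.
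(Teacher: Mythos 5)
Your argument is correct, but it takes a genuinely different route from the paper. The paper does not invoke the abstract inf-sup condition \eqref{inf-sup-b-gamma} over the full space $\bV_h$; instead it tests the error equation only with $\bv_h=(0,\bv_{D,h})$ and then picks the \emph{specific} test function $\bv_{D,h}^\xi$ constructed in the proof of \cite[Lemma 5.1]{ambartsumyan2019nonlinear} for $\xi_h=P_{\Lambda_h}\lambda-\lambda_h$: this function is divergence-free, supported in the Darcy region, satisfies $\lang \bv_{D,h}^\xi\cdot\bn_D,\xi_h\rang_\Gamma=\|\xi_h\|_\Gamma^2$ and $\|\bv_{D,h}^\xi\|_{\Omega_D}\lesssim\|\xi_h\|_\Gamma$. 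With that choice the $b_D$ term drops out ($\nabla\cdot\bv_{D,h}^\xi=0$), the projection error $\lambda-P_{\Lambda_h}\lambda$ is annihilated because $\bv_{D,h}^\xi\cdot\bn_D\in\Lambda_h$, and the whole estimate collapses to $\|P_{\Lambda_h}\lambda-\lambda_h\|_\Gamma\lesssim\|\bu_D-\bu_{D,h}\|_{\Omega_D}$ --- one line, needing only the Darcy $L^2$ velocity error from \eqref{err-est}. Your version uses the inf-sup condition as a black box, which is more self-contained at the level of stated results (you never open up the construction in \cite{ambartsumyan2019nonlinear}), but the price is that you must carry all the consistency terms $R_{u,S}$, $R_{p,S}$, $R_{u,D}$, the quadrature term, the continuity of $a_h$ and $b_h$, and in particular the intermediate pressure bound $\|Q_hp-p_h\|_W$ from the proof of Theorem~\ref{thm-conv}. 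Your bookkeeping is right: you correctly restrict the projection error to the Stokes trace (the Darcy pairing vanishes since $\bv_{D,h}\cdot\bn_D|_\Gamma\in\Lambda_h$), and since the intermediate bounds \eqref{eq: final error eq u} and the one for $\|Q_hp-p_h\|_W$ do not involve $|p_D|_{1,\Omega_D}$ or $|\nabla\cdot\bu_D|_{1,\Omega_D}$, your final right-hand side matches the theorem statement. The one point worth being explicit about is that the bounds \eqref{R1R2} are applied here to arbitrary $\bv_{S,h}\in\bV_{S,h}$ rather than to discretely divergence-free test functions; this is legitimate because the rewriting \eqref{a-sh-new} only requires the \emph{trial} function $\Pi_{S,h}\bu_S$ to lie in $\bZ_{S,h}$, and the paper itself already uses \eqref{R1R2} with general test functions in its pressure estimate.
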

\begin{proof}
  We start by considering the error equation obtained by subtracting \eqref{weak-h-1} from \eqref{weak-1} and testing with $\bv_h = (0,\bv_{D,h})$:
\begin{align} \label{eq: error for lambda}
    a_D(\bu_D - \bu_{D,h}, \bv_{D,h}) + b_D(\bv_{D,h}, p_D - p_{D,h})
    + \lang \bv_{D,h}\cdot\bn_D,\lambda - \lambda_h \rang_{\Gamma} =
    0, \quad \forall \bv_{D,h} \in \bV_{D,h}.
\end{align}
The proof then relies on choosing an appropriate test function $\bv_h$. We recall the inf-sup condition \eqref{inf-sup-b-gamma}. In particular, it is shown in the proof of \cite[Lemma 5.1]{ambartsumyan2019nonlinear} that, for given $\xi_h \in \Lambda_h$, there exists $\bv_{D, h}^\xi \in \bV_{D, h}$ that satisfies
\begin{align} \label{eq: props v_Dh}
\nabla \cdot \bv_{D, h}^\xi &= 0, &
\lang \bv_{D, h}^\xi \cdot \bn_D, \xi_h \rang_{\Gamma} &= \| \xi_h \|_\Gamma^2, &
  \|\bv_{D,h}^\xi\|_{\Omega_D} &\lesssim \| \xi_h \|_\Gamma.
\end{align}
We now set $\xi_h = P_{\Lambda_h} \lambda - \lambda_h$ and choose the test function $\bv_{D, h} = \bv_{D, h}^\xi$ in \eqref{eq: error for lambda}. Using the properties \eqref{eq: props v_Dh} and the choice \eqref{L-Vn}, we derive
\begin{align*}
\| P_{\Lambda_h} \lambda - \lambda_h \|_\Gamma^2 &=
\< \bv^\xi_{D,h}\cdot\bn_D, P_{\Lambda_h} \lambda - \lambda_h \>_{\Gamma} =
\lang \bv^\xi_{D,h}\cdot\bn_D, \lambda - \lambda_h \rang_{\Gamma} =
a_D(\bu_{D, h} - \bu_D, \bv^\xi_{D, h}) \\
& \lesssim
\| \bu_{D, h} - \bu_D \|_{\Omega_D} \| \bv^\xi_{D, h} \|_{\Omega_D} 
\lesssim \| \bu_{D, h} - \bu_D \|_{\Omega_D} \| P_{\Lambda_h} \lambda - \lambda_h \|_\Gamma.
\end{align*}
To conclude the proof, we invoke the bound \eqref{err-est} restricted to
$\| \bu_{D, h} - \bu_D \|_{\Omega_D}$, the approximation property \eqref{approx-PLambda}, and the triangle inequality.
\end{proof}

\section{Domain decomposition algorithm}
\label{sec:DD}

In this section we describe a non-overlapping domain decomposition
for the solution of the algebraic system resulting from
\eqref{weak-h}. The algorithm reduces \eqref{weak-h} to
solving an interface problem for $\lambda_h$ and requires only
decoupled Stokes and Darcy subdomain solves. Following
\cite{VasWangYot}, we consider two sets of complementary subdomain
problems. Given $\lambda_h \in \Lambda_h$, let
$(u_{i,h}^*(\lambda_h),p_{i,h}^*(\lambda_h)) \in \bV_{i,h}\times W_{i,h}$, $i = S,D$, be the solution of Stokes or Darcy subdomain problems with specified normal stress (for Stokes) or pressure (for Darcy) boundary condition $\lambda_h$ on $\Gamma$:
\begin{subequations}\label{weak-h-*}
  \begin{align}
    a_{i,h}(\bu^*_{i,h}(\lambda_h), \bv_{i,h}) + b_{i,h}(\bv_{i,h}, p^*_{i,h}(\lambda_h))
    + \lang \lambda_h, \bv_{i,h} \cdot\bn_i \rang_{\Gamma} &=
    0, &
    \forall \bv_{i,h} &\in \bV_{i,h},
    \label{weak-h-*-1} \\
    b_{i,h}(\bu^*_{i,h}(\lambda_h),w_{i,h}) &= 0, &
    \forall w_{i,h} &\in W_{i,h},
    \label{weak-h-*-2}
  \end{align}
\end{subequations}
where we set $a_{D,h}(\cdot,\cdot) = a_{D}(\cdot,\cdot)$ and
$b_{D,h}(\cdot,\cdot) = b_{D}(\cdot,\cdot)$, which allows us to unify the notation
for the two types of problems. We also consider the set of complementary subdomain
problems for $(\bar \bu_{i,h},\bar p_{i,h}) \in \bV_{i,h}\times W_{i,h}$, $i = S,D$, such that
\begin{subequations}\label{weak-h-bar}
  \begin{align}
    a_{i,h}(\bar \bu_{i,h}, \bv_{i,h}) + b_{i,h}(\bv_{i,h}, \bar p_{i,h})
    &= (\bm{f}_S, \rev{\cQ} \bv_{S,h})_{\Omega_S}, &
    \forall \bv_{i,h} &\in \bV_{i,h},
    \label{weak-h-bar-1} \\
    b_{i,h}(\bar \bu_{i,h},w_{i,h}) &= -(f_D,w_{D,h})_{\Omega_D},    &
    \forall w_{i,h} &\in W_{i,h}.
    \label{weak-h-bar-2}
  \end{align}
\end{subequations}
The first set of subdomain problems incorporates interface data as
boundary condition, while setting the outside boundary conditions and
source terms to zero. The second set has zero data on the interface
and uses the true outside boundary conditions and source terms. It is
easy to check that the solution to \eqref{weak-h} satisfies
\begin{align*}
	\bu_h &= \bu^*_{h}(\lambda_h) + \bar \bu_h, &
	p_h &= p_h^*(\lambda_h) + \bar p_h,
\end{align*}
where $\lambda_h \in \Lambda_h$ is the solution of the interface problem
\begin{align}\label{interface-pressure}
  s_h(\lambda_h,\xi_h) &\equiv - b_{\Gamma}(\bu^*_{h}(\lambda_h),\xi_h)
  = b_{\Gamma}(\bar \bu_h,\xi_h), &
  \forall \xi_h &\in \Lambda_h.
\end{align}

\begin{lemma}
  The bilinear form $s_h(\lambda_h,\xi_h)$ is symmetric and positive definite on
  $\Lambda_h$.
\end{lemma}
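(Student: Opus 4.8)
The plan is to show the two properties of $s_h$ separately, working from the definition $s_h(\lambda_h,\xi_h) = -b_\Gamma(\bu^*_h(\lambda_h),\xi_h)$ and exploiting the structure of the subdomain problems \eqref{weak-h-*}.

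\textbf{Symmetry.} Given $\lambda_h, \xi_h \in \Lambda_h$, I would take the subdomain problem \eqref{weak-h-*-1} for the data $\lambda_h$ and test it with $\bv_{i,h} = \bu^*_{i,h}(\xi_h)$, summing over $i = S,D$. Likewise I would take \eqref{weak-h-*-1} for the data $\xi_h$ and test with $\bv_{i,h} = \bu^*_{i,h}(\lambda_h)$. Using \eqref{weak-h-*-2} (with the appropriate complementary solution as the test function in the $b_{i,h}(\cdot,\cdot)$ slot) the $b_{i,h}$ terms in each identity vanish, so each reduces to an expression involving only $a_{i,h}$ and the interface pairing. Concretely, $\sum_i a_{i,h}(\bu^*_{i,h}(\lambda_h),\bu^*_{i,h}(\xi_h)) = -\sum_i\lang \lambda_h,\bu^*_{i,h}(\xi_h)\cdot\bn_i\rang_\Gamma = -b_\Gamma(\bu^*_h(\xi_h),\lambda_h) = s_h(\xi_h,\lambda_h)$, and symmetrically the same left-hand side equals $s_h(\lambda_h,\xi_h)$. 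Since $a_{S,h}$ is symmetric (clear from \eqref{a-sh}, as $I_{E_i}$ is a symmetric quadrature functional and each integrand is symmetric in its two arguments) and $a_D$ is symmetric, the two left-hand sides agree, giving $s_h(\lambda_h,\xi_h) = s_h(\xi_h,\lambda_h)$.

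\textbf{Positive definiteness.} Taking $\xi_h = \lambda_h$ in the identity just derived gives $s_h(\lambda_h,\lambda_h) = \sum_i a_{i,h}(\bu^*_{i,h}(\lambda_h),\bu^*_{i,h}(\lambda_h)) \ge 0$, using that $\bu^*_{i,h}(\lambda_h) \in \bZ_{i,h}$ by \eqref{weak-h-*-2} together with the coercivity bounds \eqref{coer-a-MAC} and \eqref{coer-a-Darcy} (equivalently \eqref{coer-a}). For strict positivity, suppose $s_h(\lambda_h,\lambda_h) = 0$. Then coercivity forces $\bu^*_{S,h}(\lambda_h) = 0$ and $\bu^*_{D,h}(\lambda_h) = 0$. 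Substituting back into \eqref{weak-h-*-1} for the Darcy block, $b_D(\bv_{D,h}, p^*_{D,h}(\lambda_h)) + \lang \lambda_h, \bv_{D,h}\cdot\bn_D\rang_\Gamma = 0$ for all $\bv_{D,h} \in \bV_{D,h}$; since $\Lambda_h = \bV_{D,h}\cdot\bn|_\Gamma$, I would use the interface inf-sup condition \eqref{inf-sup-b-gamma} (or directly choose $\bv_{D,h}$ with $\nabla\cdot\bv_{D,h} = 0$ and $\bv_{D,h}\cdot\bn_D|_\Gamma$ realizing $\lambda_h$, as in \eqref{eq: props v_Dh}) to conclude $\|\lambda_h\|_\Gamma^2 \lesssim -b_D(\bv_{D,h},p^*_{D,h}(\lambda_h)) = (\nabla\cdot\bv_{D,h}, p^*_{D,h}(\lambda_h))_{\Omega_D} = 0$, hence $\lambda_h = 0$.

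\textbf{Main obstacle.} The routine part is the symmetry bookkeeping; the only genuine point to get right is the last step of positive definiteness, namely extracting $\lambda_h = 0$ from the Darcy equation once the velocities vanish. This requires that the interface pairing $\lang\lambda_h,\bv_{D,h}\cdot\bn_D\rang_\Gamma$ controls $\|\lambda_h\|_\Gamma$ over $\bV_{D,h}$, which is exactly guaranteed by the choice $\Lambda_h = \bV_{D,h}\cdot\bn|_\Gamma$ and the construction behind \eqref{inf-sup-b-gamma}/\eqref{eq: props v_Dh}; one must be careful to pick the divergence-free $\bv_{D,h}$ so that the $b_D$ term survives but pairs only against the (now unknown) pressure, which is then annihilated. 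No new estimates are needed beyond what is already established in Sections~\ref{sec:well-posed}.
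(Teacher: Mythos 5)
Your proposal is correct and follows essentially the same route as the paper: test each subdomain problem \eqref{weak-h-*} with the complementary solution, use \eqref{weak-h-*-2} to eliminate the $b_{i,h}$ terms, and read off $s_h(\xi_h,\lambda_h)=\sum_i a_{i,h}(\bu^*_{i,h}(\lambda_h),\bu^*_{i,h}(\xi_h))$, from which symmetry and semi-definiteness follow via the symmetry of $a_{S,h}$, $a_D$ and the coercivity bounds \eqref{coer-a-MAC}, \eqref{coer-a-Darcy}. The only difference is that you spell out the final implication $\bu^*_{i,h}(\lambda_h)=0\Rightarrow\lambda_h=0$ via the divergence-free test function of \eqref{eq: props v_Dh}, a step the paper's proof dispatches with ``it is clear that''; your version is the more complete of the two.
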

\begin{proof}
  The proof is similar to the proof of Lemma~5.1 in
  \cite{VasWangYot}. We provide it here for completeness.
  Taking $\bv_{i,h} = \bu^*_{i,h}(\xi_h)$, $i=S,D$ in \eqref{weak-h-*} and summing
implies that
\begin{align*}
s_h(\xi_h,\lambda_h) & = - \lang \lambda_h, \bu^*_{S,h}(\xi_h) \cdot\bn_S \rang_{\Gamma}
- \lang \lambda_h, \bu^*_{D,h}(\xi_h) \cdot\bn_D \rang_{\Gamma} \\
& = a_{S,h}(\bu^*_{S,h}(\lambda_h),\bu^*_{S,h}(\xi_h)) + b_{S,h}(\bu^*_{S,h}(\xi_h), p^*_{S,h}(\lambda_h)\\
& \quad + a_{D}(\bu^*_{D,h}(\lambda_h),\bu^*_{D,h}(\xi_h)) + b_{D,h}(\bu^*_{D,h}(\xi_h), p^*_{D,h}(\lambda_h) \\
& = a_{S,h}(\bu^*_{S,h}(\lambda_h),\bu^*_{S,h}(\xi_h)) +
a_{D}(\bu^*_{D,h}(\lambda_h),\bu^*_{D,h}(\xi_h)),
\end{align*}
which implies that $s_h(\cdot,\cdot)$ is symmetric and positive
semi-definite, using the coercivity \eqref{coer-a-MAC} of
$a_{S,h}(\cdot,\cdot)$ and \eqref{coer-a-Darcy} of
$a_D(\cdot,\cdot)$. Due to the zero outside boundary conditions and
source terms in \eqref{weak-h-*}, it is clear that
$\bu^*_{i,h}(\lambda_h) = 0$ if and only if $\lambda_h = 0$, which implies
that $s_h(\cdot,\cdot)$ is positive definite.
\end{proof}

As a consequence of the above lemma, the conjugate gradient (CG) algorithm
can be applied for solving the interface problem
\eqref{interface-pressure}. Each CG iteration requires evaluating
$s_h(\lambda_h,\xi_h)$, which involves solving decoupled Stokes and Darcy
subdomain problems \eqref{weak-h-*}.

\subsection{Implementation}
We next describe how the above algorithm is implemented when using the MAC scheme \eqref{MAC-momentum-1}--\eqref{MAC-mass}. The term $\lang \lambda_h, \bv_{i,h} \cdot\bn_i \rang_{\Gamma}$ in \eqref{weak-h-*-1} that incorporates the mortar data as boundary condition for the subdomain solves can be written as
\begin{align*}
\lang \lambda_h, \bv_{i,h} \cdot\bn_i \rang_{\Gamma}
= \lang P_{i,h}\lambda_h, \bv_{i,h} \cdot\bn_i \rang_{\Gamma},
\end{align*}
where $P_{i,h}$ is the $L^2$-orthogonal projection onto
$\bV_{i,h}\cdot\bn|_{\Gamma}$. On the Darcy side, due to the mortar choice $\Lambda_h = \bV_{D,h}\cdot\bn|_{\Gamma}$, cf. \eqref{L-Vn}, the mortar data is already in the correct space. On the Stokes side, it needs to be projected first into $\bV_{S,h}\cdot\bn|_{\Gamma}$ before using it as a normal stress boundary data for the Stokes solve. In the context of the MAC scheme \eqref{MAC-momentum-1}--\eqref{MAC-mass}, $\bV_{S,h}\cdot\bn|_{\Gamma}$ consists of piecewise constant functions on the trace of the primal grid on $\Gamma$.

\section{Numerical results}
\label{sec:numerical}

In this section, we investigate the performance and applicability of the proposed method through the use of three numerical test cases in two dimensions. Case 1 investigates the convergence of the method predicted in Section~\ref{sec:error} by employing a known analytical solution. Case 2 is more challenging and considers flow in a channel past a porous obstacle. Finally, we illustrate the flexibility of the method by considering regional mesh refinements in Case 3.

\subsection{Case 1: Convergence test}

To test the convergence of the method, we use the following analytical solution (cf. \cite{vassilev-2009}):
\begin{subequations}
\begin{align}
    \bu_S &=
        \begin{pmatrix}
            \left( 2 - x \right) \left( 1.5 - y \right) \left( y - \beta \right) + G \omega \cos \left( \omega x \right) \\
            - \frac{y^3}{3} + \frac{y^2}{2} \left( \beta + 1.5 \right) - 1.5 \beta y - 0.5 + \sin \left( \omega x \right)
        \end{pmatrix} &
    \bu_D &=
        \begin{pmatrix}
            \omega \cos \left( \omega x \right) y \\
            \chi \left( y + 0.5 \right) + \sin \left( \omega x \right)
        \end{pmatrix} \\
    p_S &= -\frac{\sin \left( \omega x \right) + \chi}{2K} + 2 \mu \left( 0.5 - \beta \right) + \cos \left( \pi y \right) &
    p_D &= -\frac{\chi}{K} \frac{\left( y + 0.5 \right)^2}{2} - \frac{\sin \left( \omega x \right) y}{K},
\end{align}
\end{subequations}
where
$$
\mu = 1,\enskip K=1,\quad \alpha=0.5,\quad G = \frac{\sqrt{\mu
K}}{\alpha}, \quad \omega=6, \quad
\beta = \frac{1-G}{2(1+G)}, \quad \chi = \frac{-30\beta-17}{48}.
$$

The computational domain is taken to be $\overline\Omega=\overline\Omega_S\cup\overline\Omega_D$, where
$\Omega_S=(0,1)\times(\frac{1}{2},1)$ and
$\Omega_D=(0,1)\times(0,\frac{1}{2})$. Dirichlet boundary conditions based on the analytical solutions for $\bu_S$ and $p_D$ are used on all outer boundaries.
We start with a $15 \times 15$ square grid in $\Omega_D$ and a $16 \times 16$ square grid in the $\Omega_S$. We consider two choices for the mortar space on the interface: piecewise-constant satisfying $\Lambda_h = \bV_{D,h}\cdot\bn|_{\Gamma}$, cf. \eqref{L-Vn}, with $15$ mortar elements, and continuous piecewise-linear with $14$ mortar elements, which satisfies \eqref{L<Vn}. This grid is then refined $5$ times, and the measured errors and convergence rates are listed in Tables~\ref{tab:convtest}--\ref{tab:convtest-P1mortar-superconv}. The error norms are computed as follows. Consider the $L^2(\Omega_i)$-norm
\begin{equation*}
  \|\varphi\|_{i} = \left(\sum_{E \in \Omega_{i, h}} \int_E \varphi^2\right)^{1/2},
  \quad i \in {S,D}.
\end{equation*}
The pressure $p_{D,h}$ is a piecewise constant function and $p_{S,h}$ is reconstructed as a piecewise constant function based on its degrees of freedom at the cell-centers. The pressure errors $e_{p,i}$ are measured in the above norm:
\begin{equation*}
\quad e_{p,i} = \|p_i - p_{i,h}\|_{i}, \quad i \in {S,D}.
\end{equation*}
For the $L^2$-norms of $\bu_S$ and $\bu_D$, the following edge-norm is employed:
\begin{equation*}
    \|\bv_i\|_{e,i} = \left(\sum_{E \in \Omega_{i, h}} 
      | E |
      \sum_{e \subset \partial E} \frac{1}{|e|}
      \int_e  (\bv_i \cdot \bn)^2 \right)^{1/2},
\label{eq:velocity_norm}
\end{equation*}
in which each $e$ is an edge of the mesh. We take $e_{u_D} = \|\bu_D - \bu_{D,h}\|_{e,D}$. We note that for the discrete vector $\bu_{D,h} \in \bV_{D,h}$, $\bu_{D,h}\cdot\bn$ is constant on each edge. For $\bu_S$ we use the following $H^1(\Omega_S)$-type norm:
\begin{equation*}
  \|\bv_S\|_{S} = \left(\|\bv_S\|_{e,S}^2 + \left\|\frac{\d v_{S,1}}{\d x}\right\|_{S}^2 + \left\|\frac{\d v_{S,2}}{\d y}\right\|_{S}^2
  + \left\|\frac{\d v_{S,1}}{\d y}\right\|_{S}^2
  + \left\|\frac{\d v_{S,2}}{\d x}\right\|_{S}^2
    \right)^{1/2}, \quad e_{u_S} = \|\bu_S - \bu_{S,h}\|_{S}.
\end{equation*}
In the the first term on the right hand side above, $\bu_{S,h}\cdot\bn$ is reconstructed as constant on each edge, based on the MAC normal velocity degrees of freedom et the edge midpoints. In the second and third terms,
$\frac{\d u_{S,h,1}}{\d x}$ and $\frac{\d u_{S,h,2}}{\d y}$ are reconstructed as constants on each primary element $E$ based on their values at the cell-center $\cC$ computed in \eqref{MAC-vel-1}. In the last two terms, $\frac{\d u_{S,h,1}}{\d y}$ and $\frac{\d u_{S,h,2}}{\d x}$ are reconstructed as bilinear functions on each primary element $E$ based on their values at the vertices $\cV$ computed in \eqref{MAC-vel-2}. In Tables \ref{tab:convtest} and \ref{tab:convtest-P1mortar} we report the errors and convergence rates with piecewise-constant and piece-linear mortars, respectively. We observe first order convergence for all subdomain variables, as predicted by Theorem~\ref{thm-conv}. For the mortar variable we observe first order convergence for the piecewise-constant choice, which is consistent with Theorem~\ref{thm-conv-mortar}, and second order convergence for the piecewise-linear case. The latter is not covered by the presented theory, but it is consistent with the approximation properties of the mortar space.

We also report the errors and convergence rates using superconvergent norms based on computing the error integrals on the elements $E$ and edges $e$ with the midpoint quadrature rule, see Table~\ref{tab:convtest-superconv} for piecewise-constant mortars and Table~\ref{tab:convtest-P1mortar-superconv} for piecewise-linear mortars. We observe second order convergence for all variables. While the superconvergence analysis is beyond the scope of this paper, the rates are consistent with known superconvergence for the MAC scheme for Stokes \cite{Li-Sun,Li-Rui} and the RT$_0$ MFE method for Darcy \cite{ACWY}. Interestingly, to the best of our knowledge, these are the first numerical results in the literature reporting second order convergence for the MAC velocity in the $H^1$-norm. 

\begin{figure}
    \centering
    \begin{subfigure}{0.32\textwidth}
        \includegraphics[width=\textwidth]{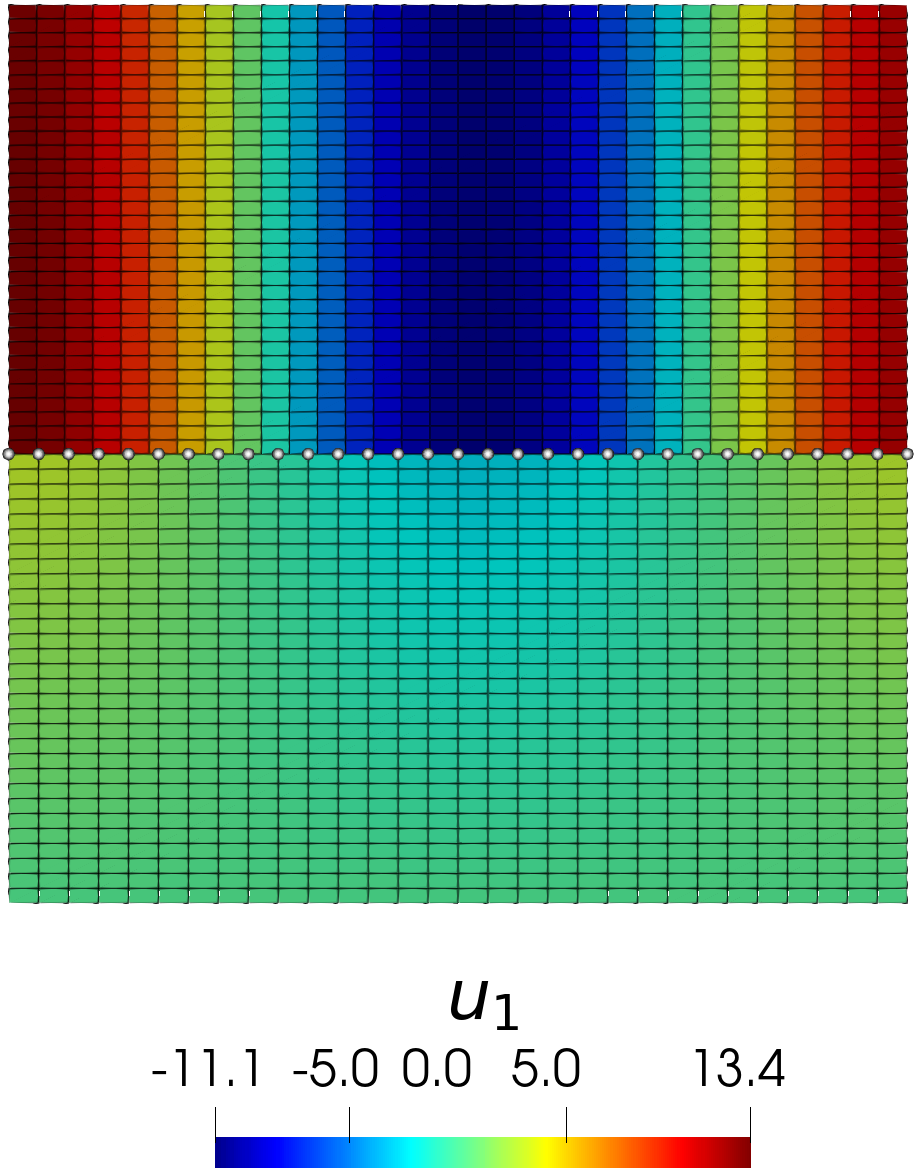}
        \caption{}
        \label{fig:convtest_vx}
    \end{subfigure}
    \begin{subfigure}{0.32\textwidth}
        \includegraphics[width=\textwidth]{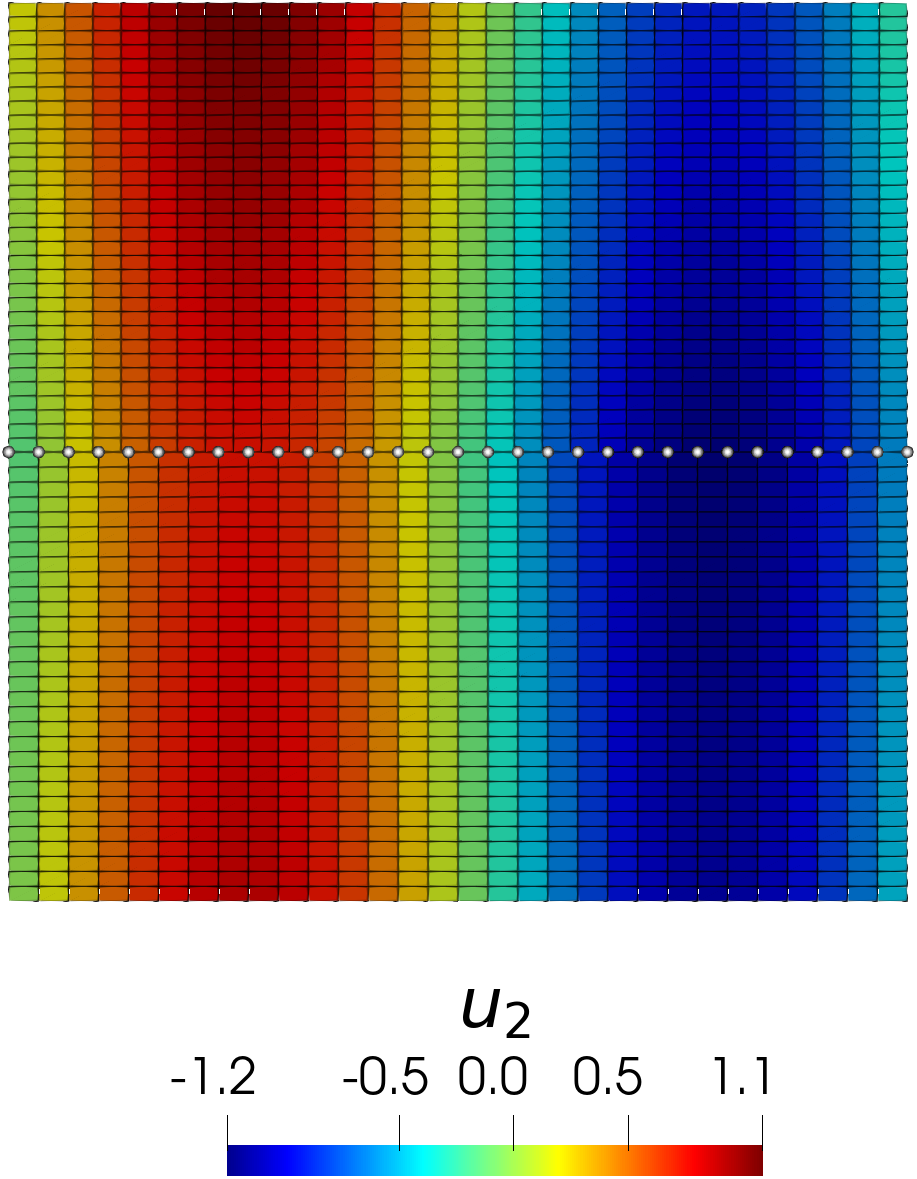}
        \caption{}
        \label{fig:convtest_vy}
    \end{subfigure}
    \begin{subfigure}{0.32\textwidth}
        \includegraphics[width=\textwidth]{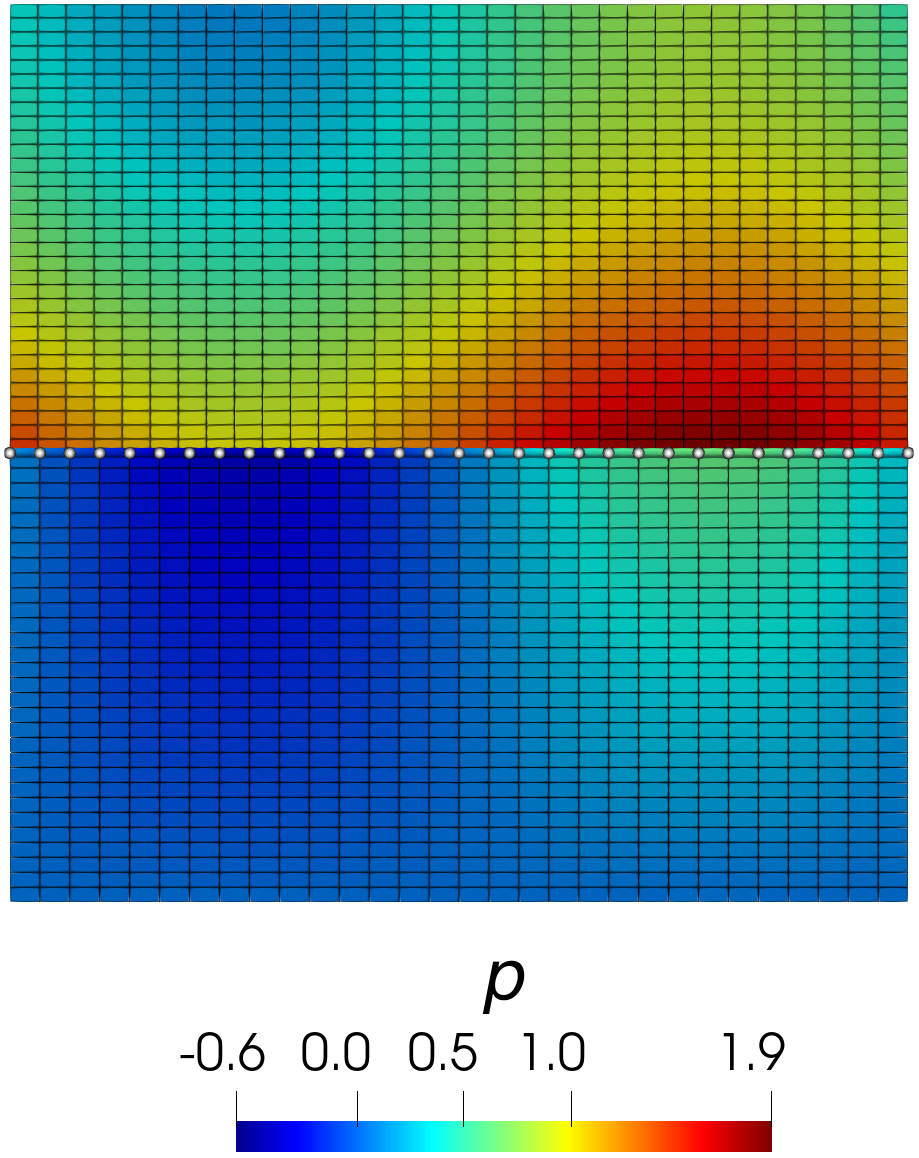}
        \caption{}
        \label{fig:convtest_p}
    \end{subfigure}
    \caption{Velocity and pressure distributions after the first refinement for Case 1.}
    \label{fig:convtest}
\end{figure}

\begin{table}[h]
    {\footnotesize
    \caption{Errors and convergence rates using piecewise-constant mortars for Case 1.}
    \label{tab:convtest}
    \begin{center}
    \begin{tabular}{l |l l|l l|l l|l l|l l}
        \toprule
        $i$ & $e_{p, D}$ & $r_{p, D}$ & $e_{u, D}$ & $r_{u, D}$ & $e_{p, S}$ & $r_{p, S}$ & $e_{u, S}$ & $r_{u, S}$ & $e_{\lambda}$ & $r_{\lambda}$ \\
        \midrule
        0 & 1.70e-02 &  & 9.21e-02 &  & 2.74e-01 &  & 3.80e+00 &  & 3.99e-02 &  \\
        1 & 8.53e-03 & 9.98e-01 & 4.49e-02 & 1.04e+00 & 7.01e-02 & 1.96e+00 & 1.90e+00 & 9.98e-01 & 1.99e-02 & 1.00e+00 \\
        2 & 4.26e-03 & 9.99e-01 & 2.24e-02 & 1.01e+00 & 1.88e-02 & 1.90e+00 & 9.52e-01 & 1.00e+00 & 9.98e-03 & 1.00e+00 \\
        3 & 2.13e-03 & 1.00e+00 & 1.12e-02 & 1.00e+00 & 5.71e-03 & 1.72e+00 & 4.76e-01 & 1.00e+00 & 4.99e-03 & 1.00e+00 \\
        4 & 1.07e-03 & 1.00e+00 & 5.58e-03 & 1.00e+00 & 2.16e-03 & 1.40e+00 & 2.38e-01 & 1.00e+00 & 2.49e-03 & 1.00e+00 \\
        5 & 5.33e-04 & 1.00e+00 & 2.79e-03 & 1.00e+00 & 9.76e-04 & 1.15e+00 & 1.19e-01 & 1.00e+00 & 1.25e-03 & 1.00e+00 \\
        \bottomrule
    \end{tabular}
    \end{center}
    }
\end{table}

\begin{table}[h]
    {\footnotesize
    \caption{Errors and convergence rates using piecewise-linear mortars for Case 1.}
    \label{tab:convtest-P1mortar}
    \begin{center}
    \begin{tabular}{l |l l|l l|l l|l l|l l}
        \toprule
        $i$ & $e_{p, D}$ & $r_{p, D}$ & $e_{u, D}$ & $r_{u, D}$ & $e_{p, S}$ & $r_{p, S}$ & $e_{u, S}$ & $r_{u, S}$ & $e_{\lambda}$ & $r_{\lambda}$ \\
        \midrule
        0 & 1.70e-02 &  & 9.11e-02 &  & 2.72e-01 &  & 3.80e+00 &  & 1.84e-03 &  \\
        1 & 8.53e-03 & 9.98e-01 & 4.48e-02 & 1.02e+00 & 6.99e-02 & 1.96e+00 & 1.90e+00 & 9.98e-01 & 4.34e-04 & 2.08e+00 \\
        2 & 4.26e-03 & 9.99e-01 & 2.23e-02 & 1.01e+00 & 1.87e-02 & 1.90e+00 & 9.52e-01 & 1.00e+00 & 1.01e-04 & 2.11e+00 \\
        3 & 2.13e-03 & 1.00e+00 & 1.12e-02 & 1.00e+00 & 5.70e-03 & 1.71e+00 & 4.76e-01 & 1.00e+00 & 2.50e-05 & 2.01e+00 \\
        4 & 1.07e-03 & 1.00e+00 & 5.58e-03 & 1.00e+00 & 2.16e-03 & 1.40e+00 & 2.38e-01 & 1.00e+00 & 6.24e-06 & 2.00e+00 \\
        5 & 5.33e-04 & 1.00e+00 & 2.79e-03 & 1.00e+00 & 9.75e-04 & 1.15e+00 & 1.19e-01 & 1.00e+00 & 1.56e-06 & 2.00e+00 \\
        \bottomrule
    \end{tabular}
    \end{center}
    }
\end{table}

\begin{table}[h]
    {\footnotesize
    \caption{Errors and convergence rates using piecewise-constant mortars and a midpoint quadrature rule for error integration for Case 1.}
    \label{tab:convtest-superconv}
    \begin{center}
    \begin{tabular}{l |l l|l l|l l|l l|l l}
        \toprule
        $i$ & $e_{p, D}$ & $r_{p, D}$ & $e_{u, D}$ & $r_{u, D}$ & $e_{p, S}$ & $r_{p, S}$ & $e_{u, S}$ & $r_{u, S}$ & $e_{\lambda}$ & $r_{\lambda}$ \\
        \midrule
        0 & 1.20e-03 &  & 2.50e-02 &  & 2.73e-01 &  & 2.39e-01 &  & 5.12e-03 &  \\
        1 & 2.79e-04 & 2.11e+00 & 6.00e-03 & 2.06e+00 & 6.89e-02 & 1.99e+00 & 6.01e-02 & 1.99e+00 & 1.19e-03 & 2.11e+00 \\
        2 & 7.60e-05 & 1.88e+00 & 1.60e-03 & 1.91e+00 & 1.73e-02 & 1.99e+00 & 1.50e-02 & 2.00e+00 & 3.22e-04 & 1.88e+00 \\
        3 & 1.89e-05 & 2.01e+00 & 4.06e-04 & 1.98e+00 & 4.33e-03 & 2.00e+00 & 3.76e-03 & 2.00e+00 & 8.04e-05 & 2.00e+00 \\
        4 & 4.72e-06 & 2.00e+00 & 1.05e-04 & 1.96e+00 & 1.08e-03 & 2.00e+00 & 9.41e-04 & 2.00e+00 & 2.01e-05 & 2.00e+00 \\
        5 & 1.18e-06 & 2.00e+00 & 2.77e-05 & 1.92e+00 & 2.71e-04 & 2.00e+00 & 2.35e-04 & 2.00e+00 & 5.01e-06 & 2.00e+00 \\
        \bottomrule
    \end{tabular}
    \end{center}
    }
\end{table}

\begin{table}[h]
    {\footnotesize
    \caption{Errors and convergence rates using piecewise-linear mortars and a midpoint quadrature rule for Case 1.}
    \label{tab:convtest-P1mortar-superconv}
    \begin{center}
    \begin{tabular}{l |l l|l l|l l|l l|l l}
        \toprule
        $i$ & $e_{p, D}$ & $r_{p, D}$ & $e_{u, D}$ & $r_{u, D}$ & $e_{p, S}$ & $r_{p, S}$ & $e_{u, S}$ & $r_{u, S}$ & $e_{\lambda}$ & $r_{\lambda}$ \\
        \midrule
        0 & 8.63e-04 &  & 2.04e-02 &  & 2.72e-01 &  & 2.40e-01 &  & 4.20e-03 &  \\
        1 & 2.15e-04 & 2.01e+00 & 5.08e-03 & 2.01e+00 & 6.86e-02 & 1.99e+00 & 6.02e-02 & 1.99e+00 & 1.03e-03 & 2.02e+00 \\
        2 & 5.32e-05 & 2.01e+00 & 1.26e-03 & 2.01e+00 & 1.72e-02 & 2.00e+00 & 1.51e-02 & 2.00e+00 & 2.51e-04 & 2.04e+00 \\
        3 & 1.33e-05 & 2.00e+00 & 3.16e-04 & 2.00e+00 & 4.31e-03 & 2.00e+00 & 3.77e-03 & 2.00e+00 & 6.26e-05 & 2.00e+00 \\
        4 & 3.32e-06 & 2.00e+00 & 7.90e-05 & 2.00e+00 & 1.08e-03 & 2.00e+00 & 9.43e-04 & 2.00e+00 & 1.57e-05 & 2.00e+00 \\
        5 & 8.29e-07 & 2.00e+00 & 1.98e-05 & 2.00e+00 & 2.70e-04 & 2.00e+00 & 2.36e-04 & 2.00e+00 & 3.91e-06 & 2.00e+00 \\
        \bottomrule
    \end{tabular}
    \end{center}
    }
\end{table}

\subsection{Case 2: Porous obstacle}

This test case is inspired by \cite{MAC-MPFA} and
considers a free-flow channel of dimensions $0.75\times0.25$, with a square porous obstacle of dimensions $0.25\times 0.2$ placed halfway on the floor of the channel. It is designed to illustrate the flexibility of the mortar method to use different grids in the two regions in order to resolve local solution features. Flow is enforced from left to right by setting 
$\boldsymbol{\sigma}_S \mathbf{n} \vert_{x = 0} = 1.1 \mathbf{n}$ and 
$\boldsymbol{\sigma}_S \mathbf{n} \vert_{x = 0.75} = \mathbf{n}$ on the left and right boundaries, respectively, while no-flow and no-slip conditions are used on the top and bottom boundaries. We set $\mu = 1$ and $\alpha = 1$. The permeability of the porous medium is set as the following anisotropic tensor:
\begin{equation}
    \mathbf{K} = \mathbf{R}\left(\varphi\right) 
                 \left( 
                    \begin{matrix} 
                        \frac{1}{\beta} k & 0 \\ 
                        0 & k 
                    \end{matrix} 
                 \right)
                 \mathbf{R}^{-1}\left(\varphi\right),
    \quad \mathrm{with} \quad
    \mathbf{R}\left(\varphi\right) = \left(
        \begin{matrix}
            \mathrm{cos}\,\varphi & - \mathrm{sin}\,\varphi \\
            \mathrm{sin}\,\varphi &   \mathrm{cos}\,\varphi            
        \end{matrix}
    \right),
    \label{eq:porous_obstacle_permeability}
\end{equation}
with an anisotropy ratio of $\beta = 100$, $k = 10^{-5}$, and angle $\varphi = \pi/4$.

Figure~\ref{fig:obstacle} shows the velocity and pressure distributions in the domain.
Qualitatively, we see that the flow is partially blocked by the obstacle leading to a high pressure upstream from the block. The anisotropy of the porous medium forces the flow downward and, due the no-flow conditions at the bottom boundary, leads to a high pressure in the lower left triangular region of the obstacle. Along the top of the block, a higher velocity is observed due to the narrowing of the channel. The mesh in Stokes region is graded so that it is finer in the area above the obstacle where the velocity is high. We note that the resulting mismatch between the mesh of the porous medium and the mesh of the free-flow domain introduces no visible artifacts.

\begin{figure}
    \centering
    \begin{subfigure}{0.8\textwidth}
        \includegraphics[width=\textwidth]{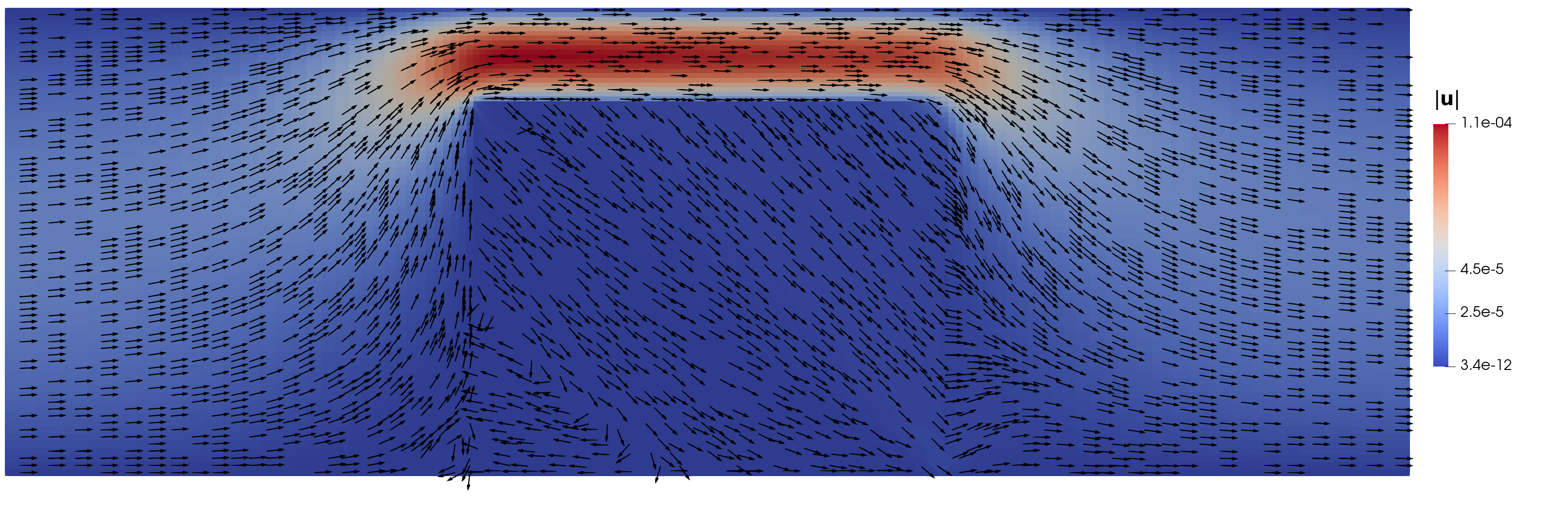}
        \label{fig:obstacle_velocity}
    \end{subfigure}
    \begin{subfigure}{0.8\textwidth}
        \includegraphics[width=\textwidth]{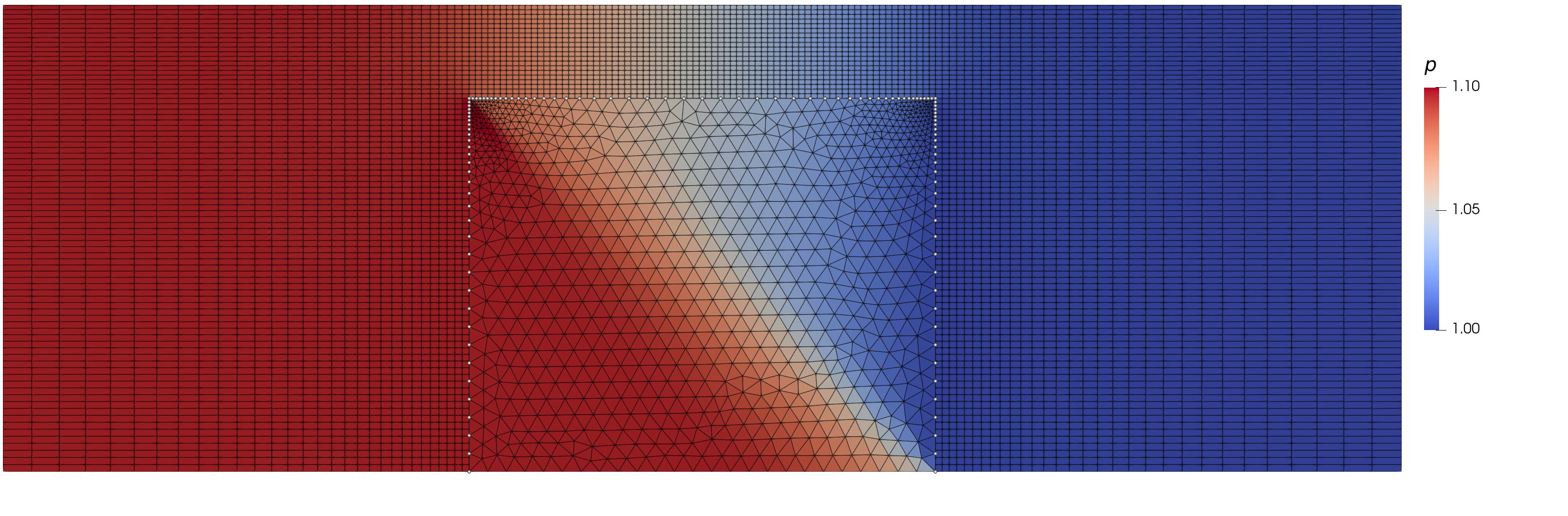}
        \label{fig:obstacle_mesh}
    \end{subfigure}
    \caption{Velocity (top) and pressure together with the mesh (bottom) for Case 2.}
    \label{fig:obstacle}
\end{figure}

\subsection{Case 3: Locally adapted grids}

This test case is motivated by modeling coupled surface and subsurface flows.
The porous medium characterization is inspired by \cite[Example 4]{FluxMortarMPFA} and considers a 
two-dimensional permeability field from the second data set of the
Society of Petroleum Engineers (SPE) Comparative Solution Project SPE10 (see \href{https://www.spe.org/en/csp/}{spe.org/csp/}).
In \cite[Example 4]{FluxMortarMPFA}, the subsurface flow domain is decomposed into $4 \times 4$ subdomains, each of which is discretized with a grid whose refinement reflects the permeability variation in that subdomain. This way, regions with high permeability variations are discretized with finer meshes in comparison with regions where permeability variations are lower. In this example, we take the two center rows of the domain decomposition presented in \cite[Example 4]{FluxMortarMPFA}, flip them vertically, and place a surface flow domain on top. Figure \ref{fig:locally_adapted_mesh} illustrates the permeability field in the porous medium and the meshes in the subdomains. We note that the resulting Stokes and Darcy grids are non-matching along the interface with varying ratio. Moreover, the decomposition of the Darcy domain results in several non-matching Darcy-Darcy interfaces. While the formulation and theory presented in this paper focus on one Stokes and one Darcy subdomain, they can be extended to multiple Stokes and Darcy subdomains using tools developed in \cite{ACWY,APWY,GVY,VasWangYot}.

The entire domain is $6 \times 4.5$ of which the top band with height $1.5$ constitutes the free-flow region. Flow is enforced from left to right along the fluid region by imposing
$\boldsymbol{\sigma}_S \mathbf{n} \vert_{x = 0} = \mathbf{n}$ and 
$\boldsymbol{\sigma}_S \mathbf{n} \vert_{x = 6} = \mathbf{0}$ on the left and right boundaries, respectively. At the bottom of the porous medium, a fixed pressure of $p = 0$ is 
used to also drive the flow downwards through the porous medium. On all remaining boundaries,
no-slip and/or no-flow boundary conditions are applied. We set $\mu = 1$ and $\alpha = 1$.

A visualization of the velocity distribution in the domain is shown in Figure~\ref{fig:locally_adapted_velocity}.
We once again observe a qualitatively good fit with the expected behavior of the system. The majority of the flow infiltrates the porous medium in the first half of the domain and then follows the high-permeable regions to the bottom boundary. The locally refined grids accurately capture the channelized flow field while the coarser grids in the low-permeable regions allow for a reduction in computational cost.

\begin{figure}
    \centering
    \includegraphics[width=0.8\textwidth]{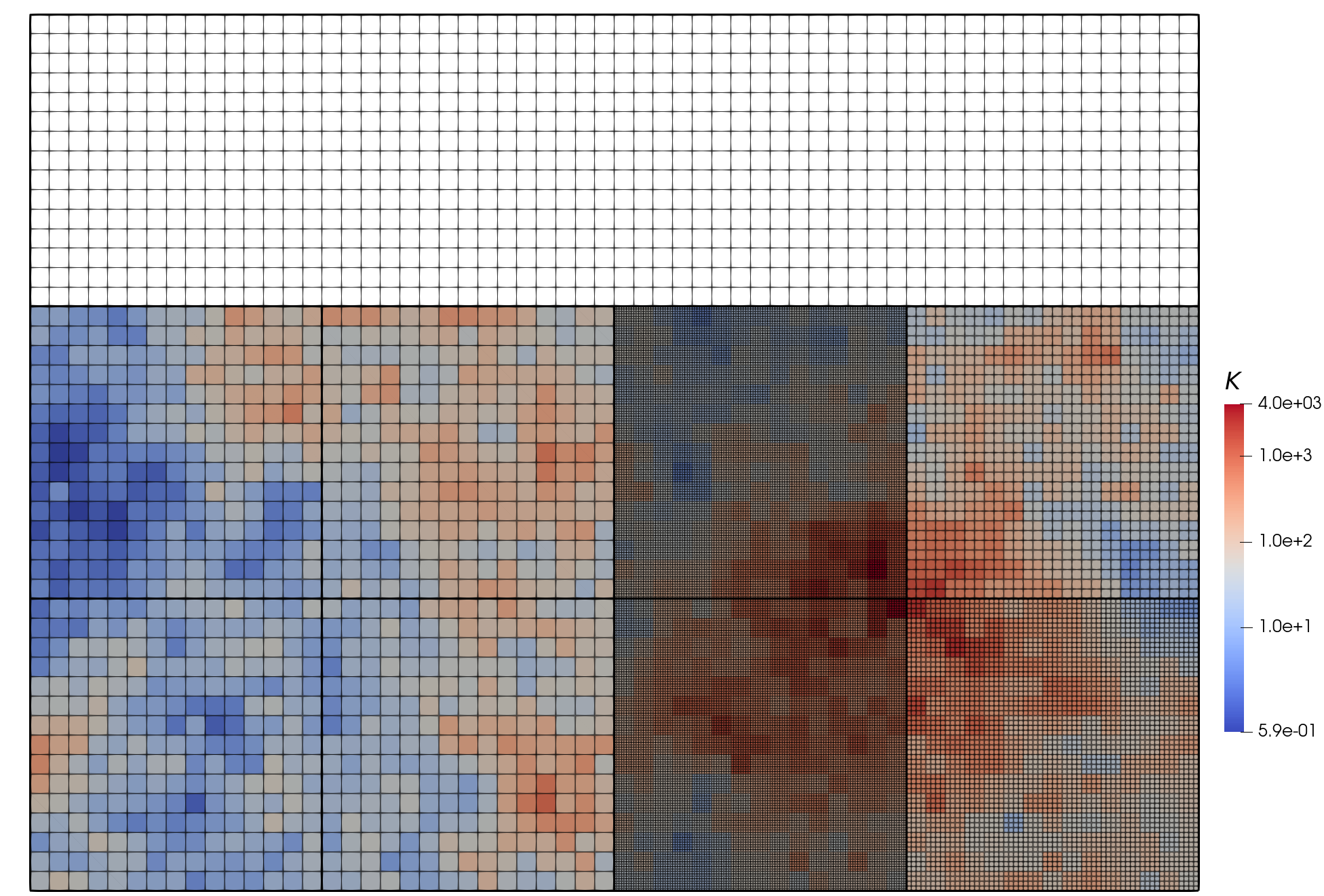}
    \caption{Visualization of the mesh over the entire domain and the permeability distribution used in the porous medium for Case 3.}
    \label{fig:locally_adapted_mesh}
\end{figure}

\begin{figure}
    \centering
    \includegraphics[width=0.8\textwidth]{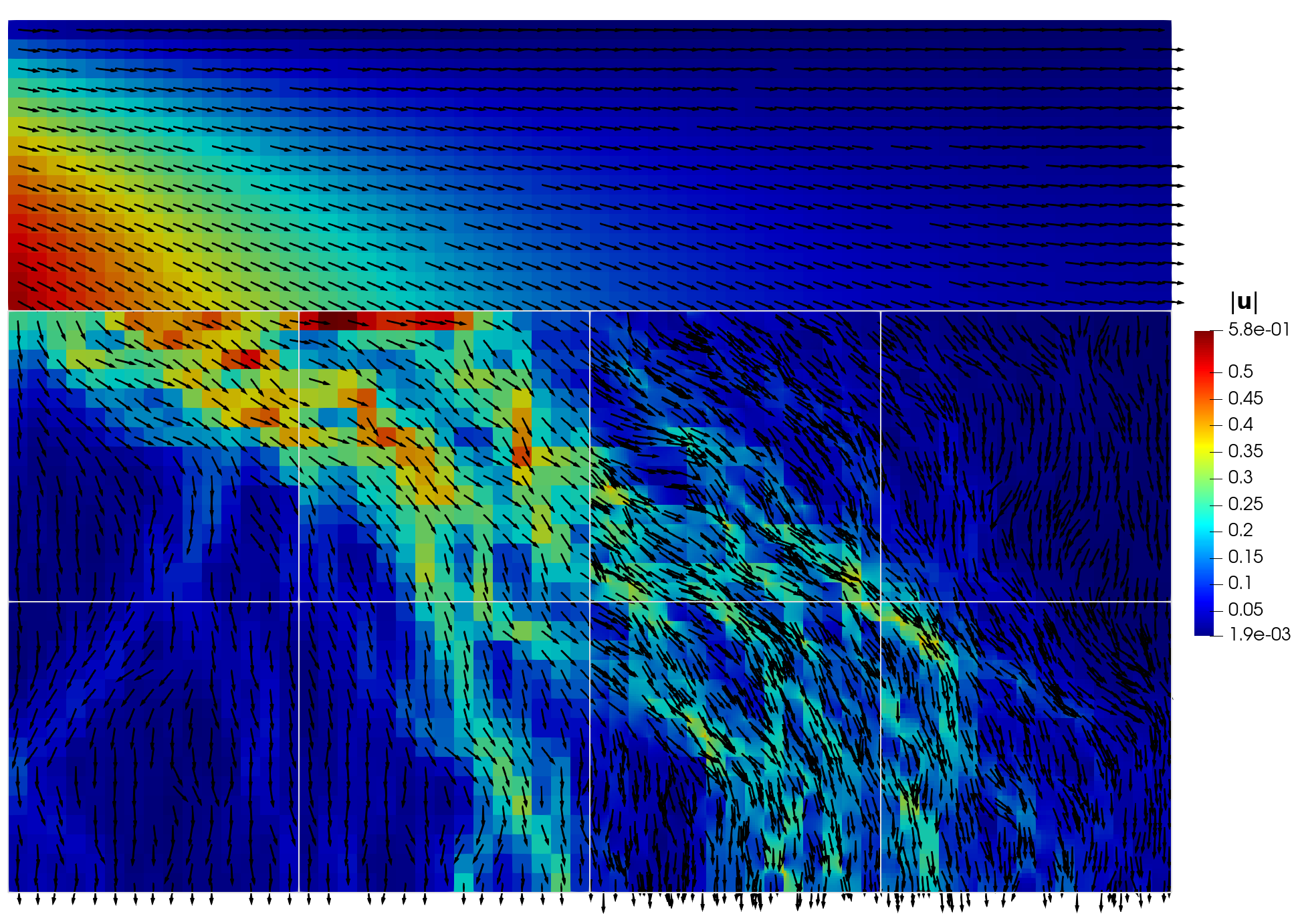}
    \caption{Velocity distribution for Case 3.}
    \label{fig:locally_adapted_velocity}
\end{figure}

\section{Conclusions}\label{sec:conclusions}

We presented a numerical method for coupled Stokes-Darcy flows that exhibits local mass and momentum conservation and allows for non-matching grids on the interface. The method combines the MAC scheme for Stokes, the RT$_0$ MFE method for Darcy, and mortar finite elements on the interface. We established well posedness and first order convergence of the method. We further presented a non-overlapping domain decomposition algorithm for the solution of the resulting coupled algebraic problem, which requires solving only decoupled subdomain problems and can result in scalable parallel implementations. A numerical test was presented to verify the theoretical convergence results. Second order superconvergence was also observed in suitable discrete norms. Finally, two computational experiments for challenging problems were presented to illustrate the applicability and flexibility of the proposed method.

\section*{Data availability}
Data are available at the following repositories:\\
git repository: https://git.iws.uni-stuttgart.de/dumux-pub/boon2023a;\\
source code: https://doi.org/10.18419/darus-3598;\\
results: https://doi.org/10.18419/darus-3599.

\section*{Acknowledgments}
This project has received funding from the European Union's Horizon 2020 research and innovation programme under the Marie Sk\l odowska-Curie grant agreement No. 101031434 -- MiDiROM, from the Deutsche Forschungsgemeinschaft (DFG, German Research Foundation) under SFB 1313, Project Number 327154368, from University of Stuttgart Cluster of Excellence SimTech, and from the U.S. National Science Foundation under grant DMS 2111129.

\section*{Declarations}
The authors have no competing interests to declare that are relevant to the content of this article.

\bibliographystyle{abbrv}
\bibliography{SD-MAC-MFE}

\end{document}